\numberwithin{equation}{section}  %使公式编号形如（1.1）
\newtheorem{theorem}{Theorem}[section]
\newtheorem{corollary}[theorem]{Corollary}
\newtheorem*{corollary*}{Corollary}
\newtheorem*{claim*}{Claim}
\newtheorem{lemma}[theorem]{Lemma}
\newtheorem*{lemma*}{Lemma}
\newtheorem{proposition}[theorem]{Proposition}
\newtheorem*{proposition*}{Proposition}
\newtheorem{remark}[theorem]{Remark}
\newtheorem*{remark*}{Remark}
\newtheorem*{example*}{Example}
\newtheorem*{question*}{Question}
\newtheorem*{definition*}{Definition}
\begin{document}
\begin{CJK*}{GBK}{song}%删除

\begin{center}
{\large  \bf The $(b, c)$-inverse in semigroups and rings with involution}\\
\vspace{0.8cm} {\small \bf Xiaofeng Chen, Jianlong Chen} \footnote{
E-mail: xfc189130@163.com. Corresponding author, E-mail:
jlchen@seu.edu.cn}

\vspace{0.6cm} {\rm School of Mathematics, Southeast University, Nanjing 210096, China}

\end{center}

\bigskip
{ \bf  Abstract:}  \leftskip0truemm\rightskip0truemm  In this paper, we  first prove  that if $a$ is both left $(b, c)$-invertible and left $(c, b)$-invertible,  then $a$ is both  $(b, c)$-invertible and  $(c, b)$-invertible in a $*$-monoid, which generalized the recent result about the inverse along an element by Wang and Mosi\'{c},  under the conditions $(ab)^{*}=ab$ and $(ac)^{*}=ac$.  In addition, we consider that $ba$ is  $(c,b)$-invertible, and at the same time $ca$ is  $(b,c)$-invertible under the same conditions, which extend the related results about Moore-Penrose inverses by Chen et al. to $(b,c)$-inverses.  As applications, we obtain that under condition $(a^{2})^{*}=a^{2}$, $a$ is an EP element if and only if $a$ is one-sided core invertible if and only if $a$ is group invertible.

{ \textbf{Key words:}} $(b, c)$-inverse, inverse along an element, core inverse, EP element.

{ \textbf{AMS subject classifications:}}  15A09, 16W10.
 \bigskip

%%%%%%%%%%%%%%%%%%%%%%%%%%%%%%%%%%%%%%%%%%%%%%%%%%%%%%%%%%%%%%%%%%%%%
%%%%%%%%%%%%%%%%%%%%%    Section 1   %%%%%%%%%%%%%%%%%%%%%%%%%%%%
%%%%%%%%%%%%%%%%%%%%%%%%%%%%%%%%%%%%%%%%%%%%%%%%%%%%%%%%%%%%%%%%%%%%%
%%%%%%%%%%%%%%%%%%%%%%%%%%%%%%%%%%%%%%%%%%%%%%%%%%%%%%%%%%%%%%%%%%%%%
\section { \bf Introduction}
In 2010, Baksalary and Trenkler \cite{BT} introduced the core  inverse of a complex matrix. In 2014, Rak\'{\i}c et al. \cite{Raki} generalized core inverses to the case of an element in a ring with involution, and they use five equations to characterize the core inverse. Later Xu et al. \cite{Xu} proved that these five equations can be reduced to three equations. In fact, the concept of the core inverse characterized by the three equations also holds in a $*$-monoid. In the same way, the Moore--Penrose inverse can also be defined in a $*$-monoid by four equations. In 2011, Mary \cite{Mary} introduced a new generalized inverse, which called the inverse along an element in semigroups. In 2017, Zhu et al. \cite{zhu}  introduced left (right) inverse along an element, which generalized the inverse along an element to one-sided cases. In 2012, Drazin \cite{Dc} introduced concept of $(b,c)$-inverses in semigroups. This generalized inverse is a generalization of some well-known generalized inverses such as the Moore--Penrose inverse, the group inverse, the core inverse and  the inverse along an element. In 2016, Drazin  generalized the $(b,c)$-inverse to  one-sided cases in \cite{Df}.

Let $S$ be a $*$-monoid and $a\in S$. It is well known that $a$ is Moore--Penrose invertible if and only if $a\in aa^{*}S\cap Sa^{*}a$. In 2016, Zhu et al. \cite{zhu} generalized this result to one-sided cases, they proved that $a$ is Moore--Penrose invertible if and only if $a\in aa^{*}aS$ if and only if $a\in Saa^{*}a$. In other words, $a$ is Moore--Penrose invertible if and only if $a$ is left invertible along $a^{*}$ if and only if $a$ is right invertible along $a^{*}$. Recently, Wang and  Mosi\'{c} \cite{wang} extended the above result and showed that if $a$ is  left invertibility along $d$, then $a$ is  invertibility along $d$ under the condition $(ad)^{*}=ad$. Motivated by above results,  we consider when one-sided $(b,c)$-inverses are $(b,c)$-inverses, which generalize above results to the cases of $(b,c)$-inverses in Section 3.

In 2017, Chen et al. \cite{chen} showed that $a$ is Moore--Penrose invertible and $aS=a^{2}S$ if and only if $a^{*}a$ is left invertible along $a^{*}$ in
$S$.  Chen et al. also proved that $a$ is both Moore-Penrose invertible and  group invertible if and only if $a^{*}a$ is invertible along $a$. Motivated by this, we extend this result to the cases of $(b,c)$-inverses under some conditions in Section 4.

Moreover,  Chen et al. \cite{chen} also showed that $a$ is both Moore--Penrose invertible and  group invertible if and only if  $(a^{*}a)^{k}$ is invertible along $a$ if and only if $(aa^{*})^{k}$ is invertible along $a$ in $S$, where $k$ is an arbitrary given positive integer.  Motivated by this, we generalize the above result to the inverse along an element under some conditions in Section 5.

%%%%%%%%%%%%%%%%%%%%%%%%%%%%%%%%%%%%%%%%%%%%%%%%%%%%%%%%%%%%%%%%%%%%%
\section { \bf Preliminaries}
Throughout this paper,   $S$ is a $*$-monoid and $R$ is a  unital $*$-ring. we call that $S$ is a $*$-monoid, that is a monoid with involution $*$.  An involution $ *: S\rightarrow S$ is an anti-isomorphism which satisfies $(a^{*})^{*}=a, ~ (ab)^{*}=b^{*}a^{*},$ for all $a, b\in S$.  $R$ is a unital $*$-ring if there exists an  involution $ *: R\rightarrow R$ which is an anti-isomorphism  satisfying $(a^{*})^{*}=a, ~(a+b)^{*}=a^{*}+b^{*}, ~ (ab)^{*}=b^{*}a^{*},$ for all $a, b\in R$.

An element $a\in S$ is said to be Moore--Penrose invertible, if there exists  $x\in S$ satisfying:
$$(1)~ axa=a, ~ (2)~ xax=x, ~~(3) ~(ax)^{*}=ax,  ~(4)~ (xa)^{*}=xa.$$
Any element $x$ satisfying (1)--(4) is called Moore--Penrose inverse of $a$. If such an $x$ exists, it is unique and denoted by $a^{\dag}$. If $x$ satisfies (1), then $x$ is  called an inner inverse of $a$, denoted by $a^{-}$. If $x$ satisfies (1) and (3), then $x$ is a $\{1,3\}$-inverse of $a$, denoted by $a^{(1,3)}$. If $x$ satisfies (1) and (4), then $x$ is a $\{1,4\}$-inverse of $a$, denoted by $a^{(1,4)}$.  An element $a\in S$ is group invertible if there is
an $x$ satisfies (1), (2) and commutes with $a$. The group inverse of $a$ is unique if it exists and is denoted by $a^{\#}$. The sets of all Moore--Penrose invertible elements,  $\{1, 3\}$-invertible elements, $\{1, 4\}$-invertible elements and group invertible elements in $S$ are denoted by the symbols $S^{\dag}$, $S^{(1,3)}$, $S^{(1,4)}$ and $S^{\#}$, respectively.

An element $a\in S$ is said to be core invertible \cite{Xu}, if there exists  $x\in S$ satisfying:
$$ ax^{2}=x, ~~  xa^{2}=a, ~~(ax)^{*}=ax.$$
such an element $x$ is unique if it exists and called the core inverse of $a$,  denoted by $a^{\tiny\textcircled{\tiny\#}}$. The dual
concept of core inverse which is called dual core inverse. The sets of all the core invertible and dual core invertible elements in $S$ are denoted by $S^{\tiny\textcircled{\tiny\#}}$ and $S_{\tiny\textcircled{\tiny\#}}$, respectively.

If $a\in S$ is both Moore--Penrose invertible and group invertible with $a^{\dag}$ = $a^{\#}$, then $a$ is said to be an EP element. As we all know, if
$a$ is EP, then $a\in S^{\dag}\cap S^{\#}\cap S^{\tiny\textcircled{\tiny\#}}\cap S_{\tiny\textcircled{\tiny\#}}$ with $a^{\dag}$ = $a^{\#}=a^{\tiny\textcircled{\tiny\#}}=a_{\tiny\textcircled{\tiny\#}}$.

In 2011, Mary \cite{Mary} introduced a new generalized inverse using Green's pre-orders and relations. An element $a\in S$ will be said to be invertible along $d\in S$ if there exists $b\in S$ such that
$$bad=d=dab, ~~bS\subseteq dS, ~~Sb\subseteq Sd.$$
If such $b$ exists, then it is unique and called the inverse  of $a$ along $d$,  denoted by $a^{||d}$.

In \cite{zhu}, Zhu et al. introduced left (right) inverse along an element. An element $a\in S$ is left (resp. right) invertible along $d\in S$ if there exists $b\in S$ such that
$$bad=d~ (\text{resp}. ~dab=d), ~~Sb\subseteq Sd ~(\text{resp}. ~bS\subseteq dS).$$

In 2012, Drazin \cite{Dc} introduced a class of outer  inverses. For any $a, b, c\in S$,  $a\in S$ is said to be $(b, c)$-invertible if there exists $y\in S$ such that $$y\in bSy\cap ySc,~ yab=b,~ cay=c.$$  If such a $y$ exists, it is unique and called  a $(b, c)$-inverse of $a$, denoted by $a^{||(b, c)}$. Moreover, $a^{\dag}=a^{||(a^{*}, a^{*})}$, $a^{\#}=a^{||(a, a)}$, $a^{\tiny\textcircled{\tiny\#}}=a^{||(a, a^{*})}$ and $a^{||d}=a^{||(d, d)}$.

In 2016, Drazin \cite{Df} generalized the $(b,c)$-inverse to one-sided cases: For $a, b, c\in S$,  $a$ is left $(b, c)$-invertible if $b\in Scab$, or equivalently if there exists $x\in Sc$ such that $xab=b$, in which case any such $x$ will be called a left $(b, c)$-inverse of $a$. Dually, $a$ is called right $(b, c)$-invertible if $c\in cabS$, or equivalently if there exists $z\in bS$ such that $caz=c$, in which case any such $z$ will be called a right $(b, c)$-inverse of $a$.

%%%%%%%%%%%%%%%%%%%%%%%%%%%%%%%%%%%%%%%%%%%%%%%%%%%%%%%%%%%%%%%%%%%%%
\section { \bf When $a$ is both $(b,c)$-invertible and $(c,b)$-invertible?}
 In this section, we prove that if $a$ is both left $(b, c)$-invertible and left $(c, b)$-invertible, then $a$ is both $(b, c)$-invertible and  $(c, b)$-invertible under the conditions $(ab)^{*}=ab$ and $(ac)^{*}=ac$ in $S$. First, we will give several auxiliary lemmas.
\begin{lemma}\label{lem11} Let $a, b, c\in S$.
\begin{itemize}
\item[\rm(1)] \rm{\cite[Theorem 2.2]{Dc}}  $a$ is $(b, c)$-invertible if and only if $b\in Scab$ and $c\in  cabS$.

\item[\rm(2)] \rm{\cite[Theorem 2.1]{Df}} $a$ is  $(b, c)$-invertible with $(b,c)$-inverse $y$ if and only if $a$ is left $(b, c)$-invertible  with a left $(b, c)$ inverse $x$ and $a$ is right $(b, c)$-invertible  with a right $(b, c)$ inverse $z$. In this case $y=x=z$.
\end{itemize}
\end{lemma}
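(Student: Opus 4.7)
The lemma compiles two results that appear in the cited references \cite{Dc, Df}, so strictly speaking the proof is by citation; however, each part has a short direct verification that I would reconstruct to keep the section self-contained.

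For part (1), the ``only if'' direction is forced by the definition. If $y$ is a $(b,c)$-inverse of $a$, then $y \in bSy \cap ySc$ gives in particular $y \in bS$ and $y \in Sc$, so I can write $y = bs = tc$ for some $s, t \in S$. Substituting into the equations $yab = b$ and $cay = c$ produces $b = tcab \in Scab$ and $c = cabs \in cabS$. For the converse, write $b = ucab$ and $c = cabv$; then $uc \in Sc$ satisfies $(uc)ab = b$, so it is a left $(b,c)$-inverse of $a$, and $bv \in bS$ satisfies $ca(bv) = c$, so it is a right one. The two-sided conclusion then comes from part (2).

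For part (2), the forward direction is trivial because any $(b,c)$-inverse is simultaneously a left and a right one-sided $(b,c)$-inverse. For the converse, given $x = tc \in Sc$ with $xab = b$ and $z = bs \in bS$ with $caz = c$, the key device is to evaluate $xaz$ in two ways:
\[
xaz = (xab)s = bs = z, \qquad xaz = t(caz) = tc = x,
\]
which yields $x = z$; write $y$ for this common element. The hypotheses already give $yab = b$ and $cay = c$. Right-multiplying $yab = b$ by $s$ produces $y = yabs = yay$; writing $yay = bs \cdot ay = b(sa)y$ exhibits $y \in bSy$, and $yay = ya \cdot tc = y(at)c$ exhibits $y \in ySc$, completing the definition of a $(b,c)$-inverse. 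The only genuinely clever step is the two-way evaluation of $xaz$; everything else is bookkeeping, so I do not anticipate a real obstacle in either part.
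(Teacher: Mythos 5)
Your reconstruction is correct: the two-way evaluation of $xaz$ (giving $x=z$) together with $yay=y$, $yay=b(sa)y\in bSy$ and $yay=y(at)c\in ySc$ is exactly the standard argument from the cited sources, and the paper itself offers no proof of this lemma beyond the citations to Drazin. Nothing is missing.
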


\begin{lemma}\rm{\cite[Theorem 2.1(ii) and Proposition 6.1]{Dc}}\label{lem10} Let $a, b, c\in S$.  Then
$y\in S$ is the $(b, c)$-inverse of $a$ if and only if $yay = y,~ yS = bS$ and $Sy = Sc$.
\end{lemma}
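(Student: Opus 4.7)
The plan is to prove the two directions separately, using the definition of $(b,c)$-inverse from the preliminaries: $y$ is a $(b,c)$-inverse of $a$ iff $y \in bSy \cap ySc$, $yab = b$, and $cay = c$. The argument in each direction is essentially a short manipulation of containments, so the hard part is really just bookkeeping rather than any conceptual obstacle.

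For the forward direction, I would assume $y$ is the $(b,c)$-inverse. First, to get $yay = y$, I write $y = bsy$ for some $s \in S$ (from $y \in bSy$) and compute $yay = y\cdot a\cdot(bsy) = (yab)sy = bsy = y$, using $yab = b$. Next, $yS = bS$ follows by two inclusions: $y = bsy \in bS$ gives $yS \subseteq bS$, while $b = yab \in yS$ gives $bS \subseteq yS$. Symmetrically, $y \in ySc$ gives $Sy \subseteq Sc$, and $c = cay \in Sy$ gives $Sc \subseteq Sy$, so $Sy = Sc$.

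For the backward direction, I would assume $yay = y$, $yS = bS$, and $Sy = Sc$. From $yS = bS$ I extract $y = bs$ and $b = yt$ for some $s, t \in S$, and from $Sy = Sc$ I extract $y = uc$ and $c = vy$ for some $u, v \in S$. Then $y = yay = (bs)ay = b(say) \in bSy$ and $y = yay = ya(uc) = (yau)c \in ySc$, giving the outer-inverse membership conditions. The equations $yab = b$ and $cay = c$ follow from $yab = ya(yt) = (yay)t = yt = b$ and $cay = (vy)ay = v(yay) = vy = c$.

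The main (minor) obstacle is choosing which of the four representations of $y, b, c$ to substitute at each step; once the right factorizations are plugged in, each identity reduces to a single application of $yay = y$. No appeal to involutions or to any hypothesis on $ab$ or $ac$ is needed here, so the argument works at the level of a bare monoid.
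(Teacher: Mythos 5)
Your proof is correct. The paper itself gives no argument for this lemma---it is quoted from Drazin's paper \cite[Theorem 2.1(ii) and Proposition 6.1]{Dc}---and your direct verification from the definition is exactly the standard one: each identity falls out of a single substitution of the appropriate factorization followed by one use of $yab=b$, $cay=c$, or $yay=y$. One small point worth making explicit: in the backward direction you extract $y=bs$ from $yS=bS$ and $y=uc$ from $Sy=Sc$, which silently uses $y\in yS$ and $y\in Sy$; this is legitimate here because $S$ is a monoid (in a bare semigroup one would have to work with $S^1$, which is how Drazin phrases it). Also, your argument produces \emph{a} $(b,c)$-inverse in the backward direction; that it is \emph{the} $(b,c)$-inverse then follows from the uniqueness already recorded in the preliminaries.
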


\begin{lemma}\emph{\cite{Hart}} \label{lem6}
Let $a, x\in S$. Then
\begin{itemize}
\item[\rm(1)] $x$ is a $\{1,3\}$-inverse of $a$ if and only if $a=x^{*}a^{*}a$;

\item[\rm(2)]$x$ is a $\{1,4\}$-inverse of $a$ if and only if $a=aa^{*}x^{*}$.
\end{itemize}
\end{lemma}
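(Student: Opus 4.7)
The plan is to prove each of the two equivalences by direct algebraic manipulation, using only the defining identities of the involution, namely $(yz)^{*} = z^{*}y^{*}$ and $(y^{*})^{*} = y$. The two parts are mirror images of one another under $*$, so I would write (1) out carefully and note that (2) is dual.

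For the forward direction of (1), assume $axa = a$ and $(ax)^{*} = ax$. A single chain of rewrites yields
\[x^{*}a^{*}a \;=\; (ax)^{*}a \;=\; (ax)a \;=\; axa \;=\; a,\]
which is the claimed identity.

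For the backward direction of (1), suppose $a = x^{*}a^{*}a$. Applying the involution to both sides gives the companion identity $a^{*} = a^{*}ax$. With these two equations in hand, conditions (1) and (3) each follow from a single substitution:
\[axa \;=\; (x^{*}a^{*}a)xa \;=\; x^{*}(a^{*}ax)a \;=\; x^{*}a^{*}a \;=\; a,\]
\[(ax)^{*} \;=\; x^{*}a^{*} \;=\; x^{*}(a^{*}ax) \;=\; (x^{*}a^{*}a)x \;=\; ax.\]
Part (2) is then obtained by the dual computation. The forward direction reads $aa^{*}x^{*} = a(xa)^{*} = a(xa) = axa = a$. For the backward direction, $a = aa^{*}x^{*}$ yields $a^{*} = xaa^{*}$ after applying $*$, so that $xa = x(aa^{*}x^{*}) = (xaa^{*})x^{*} = a^{*}x^{*} = (xa)^{*}$, which is (4); feeding this back gives $axa = a(xa)^{*} = aa^{*}x^{*} = a$, which is (1).

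There is no real obstacle; the only small insight is that from the compact one-equation hypothesis, applying $*$ immediately produces a companion identity, and the two together are just enough to recover both the inner-inverse axiom and the self-adjointness in one substitution each. Consequently the proof is essentially mechanical and fits in a few lines.
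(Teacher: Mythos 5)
Your proof is correct. The paper does not prove this lemma at all --- it is quoted from Hartwig's paper with only a citation --- and your argument is the standard one: the forward directions are one-line substitutions using the self-adjointness of $ax$ (resp.\ $xa$), and in the backward directions applying $*$ to the single hypothesis produces the companion identity from which both axioms follow by direct substitution. All steps check out.
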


\begin{lemma}\emph{\cite{Ko}}\label{lem8}
An element $a\in S$ is EP if and only if $a$ is group invertible and $aa^{\#}=(aa^{\#})^{*}$.
\end{lemma}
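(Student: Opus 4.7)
The plan is to prove both directions directly from the definitions collected in Section~2, since this equivalence is essentially a repackaging of the Moore--Penrose axioms in the presence of a group inverse.

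For the forward direction, suppose $a$ is EP. By definition this means $a\in S^{\dag}\cap S^{\#}$ with $a^{\dag}=a^{\#}$. Then $a$ is group invertible by hypothesis, and $aa^{\#}=aa^{\dag}$ is self-adjoint by axiom~(3) of the Moore--Penrose definition. So both conditions on the right-hand side hold at once.

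For the converse, assume $a$ is group invertible with $aa^{\#}=(aa^{\#})^{*}$. The strategy is to show that the group inverse $a^{\#}$ itself satisfies the four Moore--Penrose equations, which forces $a^{\dag}$ to exist and to equal $a^{\#}$, giving EP. Equations~(1) and~(2), $a a^{\#} a = a$ and $a^{\#} a a^{\#} = a^{\#}$, are built into the definition of the group inverse. Equation~(3), $(aa^{\#})^{*}=aa^{\#}$, is exactly the assumption. Equation~(4) is where the commutativity of $a$ with its group inverse does the real work: since $a^{\#}a = aa^{\#}$, we immediately get $(a^{\#}a)^{*} = (aa^{\#})^{*} = aa^{\#} = a^{\#}a$.

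There is no genuine obstacle here; the argument is a bookkeeping exercise. The only point worth flagging is that we must use the commutation relation $a a^{\#} = a^{\#} a$ to upgrade the one self-adjointness hypothesis into the second Moore--Penrose symmetry condition, which is precisely what distinguishes this characterization from merely saying ``$a$ is group invertible.''
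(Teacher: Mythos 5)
Your proof is correct. The paper does not prove this lemma itself but cites it from Koliha--Patr\'{\i}cio, and your argument is the standard one: the forward direction reads off Moore--Penrose axiom (3), and the converse verifies that $a^{\#}$ satisfies all four Moore--Penrose equations, using the commutation $aa^{\#}=a^{\#}a$ to obtain axiom (4) from the single symmetry hypothesis, whence $a^{\dag}=a^{\#}$ by uniqueness of the Moore--Penrose inverse.
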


\begin{lemma}\emph{\cite{Raki}} \label{lem9}Let $a\in S$. Then the following conditions are equivalent:
\begin{itemize}
\item[\rm(1)] $a\in S^{\dag}\cap S^{\#}$;

\item[\rm(2)] $a\in S^{\tiny\textcircled{\tiny\#}}\cap S_{\tiny\textcircled{\tiny\#}}$.

\end{itemize}
\end{lemma}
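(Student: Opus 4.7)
The plan is to prove the two implications separately: in $(1)\Rightarrow(2)$ I will exhibit explicit formulas for the core and dual core inverses in terms of $a^{\dag}$ and $a^{\#}$ and check the three defining equations of each; in $(2)\Rightarrow(1)$ I will extract $\{1,3\}$-, $\{1,4\}$-, and group inverses directly from the core-inverse axioms, then glue them together using the classical identity $a^{\dag}=a^{(1,4)}aa^{(1,3)}$.

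For $(1)\Rightarrow(2)$, my candidate is $x:=a^{\#}aa^{\dag}$ for the core inverse and, symmetrically, $y:=a^{\dag}aa^{\#}$ for the dual core inverse. Verification is routine once one uses the standard identities $aa^{\#}=a^{\#}a$, $a(a^{\#})^{2}=a^{\#}$, $aa^{\dag}a=a$, and $(aa^{\dag})^{*}=aa^{\dag}$. For example $ax=(aa^{\#}a)a^{\dag}=aa^{\dag}$ is self-adjoint; $xa^{2}=a^{\#}(aa^{\dag}a)a=a^{\#}a^{2}=a$; and after rewriting $a^{\#}=a(a^{\#})^{2}$ one gets $aa^{\dag}\cdot a^{\#}=(aa^{\dag}a)(a^{\#})^{2}=a^{\#}$, hence $ax^{2}=aa^{\dag}\cdot a^{\#}aa^{\dag}=a^{\#}aa^{\dag}=x$. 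The dual computations establish $y=a_{\tiny\textcircled{\tiny\#}}$.

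For $(2)\Rightarrow(1)$, write $x=a^{\tiny\textcircled{\tiny\#}}$ and $y=a_{\tiny\textcircled{\tiny\#}}$. The crucial first move is to derive the hidden equality $axa=a$ from the three core-inverse axioms: right-multiplying $ax^{2}=x$ by $a^{2}$ and invoking $xa^{2}=a$ twice yields $ax\cdot a=ax\cdot xa^{2}=ax^{2}a^{2}=xa^{2}=a$. Together with $(ax)^{*}=ax$ this shows $x$ is a $\{1,3\}$-inverse of $a$, so $a\in S^{(1,3)}$; by the dual argument $y$ is a $\{1,4\}$-inverse and $a\in S^{(1,4)}$. The classical fact $S^{(1,3)}\cap S^{(1,4)}\subseteq S^{\dag}$ then gives Moore--Penrose invertibility. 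For the group inverse I try $g:=x^{2}a$: the computations $ag=ax^{2}a=xa$ and $ga=x^{2}a^{2}=x(xa^{2})=xa$ give $ag=ga$, while $aga=xa\cdot a=xa^{2}=a$ and $gag=x(ax^{2})a=x^{2}a=g$ confirm $g=a^{\#}$. The main obstacle is isolating $axa=a$ from the slightly asymmetric three core-inverse axioms; once this is in hand, every remaining step is a short algebraic manipulation.
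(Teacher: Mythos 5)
Your proof is correct. Note first that the paper itself offers no argument for this lemma: it is quoted from Rak\'{\i}c et al.\ \cite{Raki}, so you are supplying a proof where the authors only cite. Every step checks out against the paper's definitions. In $(1)\Rightarrow(2)$ the candidates $x=a^{\#}aa^{\dag}$ and $y=a^{\dag}aa^{\#}$ do satisfy the three core (resp.\ dual core) equations exactly as you compute, and these are in fact the same closed-form expressions $a^{\tiny\textcircled{\tiny\#}}=a^{\#}aa^{\dag}$, $a_{\tiny\textcircled{\tiny\#}}=a^{\dag}aa^{\#}$ that appear in \cite{Raki}, so this half is essentially the standard argument. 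In $(2)\Rightarrow(1)$ your derivation $axa=ax(xa^{2})=(ax^{2})a^{2}=xa^{2}=a$ is the right key step; combined with $(ax)^{*}=ax$ and its dual it yields $a\in S^{(1,3)}\cap S^{(1,4)}$, and the Urquhart-type identity $a^{\dag}=a^{(1,4)}aa^{(1,3)}$ (which is easily verified directly) gives $a\in S^{\dag}$. The group-inverse candidate $g=x^{2}a$ also works: $ag=ga=xa$, $aga=xa^{2}=a$, $gag=x(ax^{2})a=x^{2}a=g$. This route is arguably cleaner than the original treatment in \cite{Raki}, which works with a five-equation characterization of the core inverse; your argument uses only the reduced three-equation definition adopted in this paper, so it is self-contained in the present setting.
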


The next theorem is the main result of this section, which generalized  the result of  \cite[Theorem 3.8]{wang} to $(b,c)$-inverses.
\begin{theorem} \label{the1}Let $a, b, c\in S$. If $(ab)^{*}=ab$, $(ac)^{*}=ac$. We have the following results:
\begin{itemize}
\item[\rm(I)] The following conditions are equivalent:
\begin{itemize}
\item[\rm(1)] $a$ is both left $(b, c)$-invertible and left $(c, b)$-invertible;

\item[\rm(2)] $a$ is both $(b, c)$-invertible and  $(c, b)$-invertible.
\end{itemize}
In this case, $a^{||(b, c)}=xc$ and $a^{||(c, b)}=yb$, where $b=xcab$ and $c=ybac$.
\item[\rm(II)] The following conditions are equivalent:
\begin{itemize}
\item[\rm(1)] $a$ is both right $(b, c)$-invertible and right $(c, b)$-invertible;

\item[\rm(2)] $a$ is  both $(b, c)$-invertible and  $(c, b)$-invertible.
\end{itemize}
In this case, $a^{||(b, c)}=by$ and $a^{||(c, b)}=cx$, where $b=bacx$ and $c=caby$.
\end{itemize}
\end{theorem}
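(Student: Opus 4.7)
The implication $(2)\Rightarrow(1)$ is immediate in both parts since a $(b,c)$-inverse is in particular both a left and a right $(b,c)$-inverse. For $(1)\Rightarrow(2)$, Lemma~\ref{lem11}(1) reduces $(b,c)$-invertibility to verifying $b\in Scab$ together with $c\in cabS$, and dually for $(c,b)$-invertibility. So Part~(I) amounts to upgrading the given left factorizations $b=xcab$, $c=ybac$ to the right containments $b\in bacS$ and $c\in cabS$, and Part~(II) to upgrading the right factorizations $b=bacx$, $c=caby$ to the left containments $b\in Scab$ and $c\in Sbac$. Once both-sided containments are available, the formulas follow at once from Lemma~\ref{lem11}(2): in Part~(I), $xc\in Sc$ with $(xc)ab=b$ makes $xc$ a left $(b,c)$-inverse, so $a^{||(b,c)}=xc$, and analogously $a^{||(c,b)}=yb$; in Part~(II), $by\in bS$ with $ca(by)=c$ makes $by$ a right $(b,c)$-inverse, so $a^{||(b,c)}=by$, and analogously $a^{||(c,b)}=cx$.

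For Part~(I) I would involute $b=xcab$ and use $(ab)^{*}=ab$ to replace $b^{*}a^{*}$ by $ab$, obtaining $b^{*}=abc^{*}x^{*}$, and symmetrically $c^{*}=acb^{*}y^{*}$ from $c=ybac$ via $(ac)^{*}=ac$. Multiplying $b=xcab$ on the left by $a$ and involuting gives $ab=ab(axc)^{*}$; substituting this back into $b=xc\cdot ab$ collapses to $b=b(axc)^{*}=bc^{*}x^{*}a^{*}$. Replacing $c^{*}$ by $acb^{*}y^{*}$ now yields
\[
b=b\,(acb^{*}y^{*})\,x^{*}a^{*}=bac\cdot b^{*}y^{*}x^{*}a^{*}\in bacS,
\]
and the same manipulation with the roles of $b,c$ and $x,y$ interchanged gives $c\in cabS$.

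For Part~(II) the direct mirror of Part~(I) fails, since $(ab)^{*}=ab$ does not imply $(ba)^{*}=ba$, so a single involution will not flip a right factorization into a left one; a different bridge is required. Multiplying $b=bacx$ on the left by $a$ and involuting, using $c^{*}a^{*}=ac$ and $b^{*}a^{*}=ab$, collapses $(abacx)^{*}=x^{*}(ac)(ab)$, so $ab=x^{*}acab$; symmetrically $ac=y^{*}abac$. The key identity is then obtained by right-multiplying $ab=x^{*}acab$ by $y$ and using $cab\cdot y=caby=c$ to collapse $acab\cdot y$ to $ac$, producing the bridge $aby=x^{*}ac$, and symmetrically $acx=y^{*}ab$. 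Inserting these into the hypothesis gives
\[
b=bacx=b(acx)=b(y^{*}ab)=by^{*}\cdot x^{*}acab=(by^{*}x^{*}a)\cdot cab\in Scab,
\]
and analogously $c=cx^{*}\cdot ac=(cx^{*}y^{*}a)\cdot bac\in Sbac$. Lemma~\ref{lem11}(1) then concludes. The step I expect to be the main obstacle is precisely the discovery of the bridges $aby=x^{*}ac$ and $acx=y^{*}ab$: without them one can only reach $ab\in Scab$ and $ac\in Sbac$, which do not force $b\in Scab$ and $c\in Sbac$ in a general $*$-monoid, so Part~(II) does not admit a naive mirror of the Part~(I) argument.
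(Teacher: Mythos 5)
Your proof is correct. Part (II) is essentially the paper's argument: your identity $ab=x^{*}acab$ is the paper's equation (\ref{4}) combined with $ab=abacx$, and your final factorization $b=by^{*}x^{*}acab\in Scab$ is exactly the element the paper produces; the ``bridge'' identities $aby=x^{*}ac$ and $acx=y^{*}ab$ are a tidier repackaging of the same computation, and your diagnosis that the naive mirror of Part (I) fails is precisely why the paper introduces (\ref{4}). In Part (I) you take a mildly different route: the paper builds the candidate $z=xc$ and verifies $zaz=z$, $zS=bS$, $Sz=Sc$ against Lemma \ref{lem10}, using the auxiliary symmetries $bc^{*}x^{*}=xcb^{*}$ and $cb^{*}y^{*}=ybc^{*}$, whereas you verify Drazin's existence criterion $b\in Scab$ and $c\in cabS$ (and its $(c,b)$ mirror) from Lemma \ref{lem11}(1) via $b=b(axc)^{*}=bac(b^{*}y^{*}x^{*}a^{*})$, and only afterwards read off $a^{||(b,c)}=xc$ from Lemma \ref{lem11}(2). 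The two are close cousins --- the paper's step $c=cab(ayxc)^{*}$ is in substance your containment $c\in cabS$ --- but your version cleanly separates existence from the explicit formula and avoids checking the three conditions of Lemma \ref{lem10} directly, at the cost of not exhibiting the outer-inverse property $zaz=z$ along the way.
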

\begin{proof}
(I) (1)$\Rightarrow$(2). Since $a$  is both left $(b, c)$-invertible and left $(c, b)$-invertible, there exist $x, y\in S$ such that $b=xcab$ and $c=ybac$. From
$(ab)^{*}=ab$, $(ac)^{*}=ac$ and
\begin{equation*}
bc^{*}x^{*}=xcabc^{*}x^{*}=(xc)ab(xc)^{*},
\end{equation*}
\begin{equation*}
cb^{*}y^{*}=ybacb^{*}y^{*}=(yb)ac(yb)^{*},
\end{equation*}
we have
\begin{equation}\label{1}
bc^{*}x^{*}=(bc^{*}x^{*})^{*}=xcb^{*},
\end{equation}
\begin{equation}\label{2}
cb^{*}y^{*}=(cb^{*}y^{*})^{*}=ybc^{*}.
\end{equation}
Let $z=xc$, then $b=xcab=zab\in zS$ and
\begin{equation}\label{3}
\begin{split}
z&=xc=xybac=xyb(ac)^{*}=xybc^{*}a^{*}\xlongequal[]{(\ref{2})}xcb^{*}y^{*}a^{*}\\
&\xlongequal[]{(\ref{1})}bc^{*}x^{*}y^{*}a^{*}
=b(ayxc)^{*}\in bS.
\end{split}
\end{equation}
We obtain $bS=zS$. Since
\begin{equation*}
\begin{split}
c&=ybac=yb(ac)^{*}=ybc^{*}a^{*}\xlongequal[]{(\ref{2})}cb^{*}y^{*}a^{*}=c(ayb)^{*}\\
&=c(ayxcab)^{*}=c(ab)^{*}(ayxc)^{*}=cab(ayxc)^{*}\xlongequal[]{(\ref{3})}caz,
\end{split}
\end{equation*}
we have $Sc=Sz$. From
$$zaz=xcaz=xc=z,$$
it implies that $a^{||(b, c)}=z=xc$ by Lemma \ref{lem10}. Exchange $b$ and $c$, we have $a^{||(c, b)}=yb$.

(2)$\Rightarrow$(1). It is clear.

(II) (1)$\Rightarrow$(2). From that $a$  is both right $(b, c)$-invertible and right $(c, b)$-invertible, there exist $x, y\in S$ such that $b=bacx$ and $c=caby$. Since\\
$$x^{*}acab=x^{*}acabacx=x^{*}(ac)(ab)(ac)x,$$
thus,
\begin{equation}\label{4}
x^{*}acab=(x^{*}acab)^{*}=(ab)^{*}(ac)^{*}x=abacx.
\end{equation}
Then
\begin{equation*}
\begin{split}
b&=bacx=b(ac)^{*}x=bc^{*}a^{*}x=b(caby)^{*}a^{*}x=by^{*}abc^{*}a^{*}x\\
&=by^{*}abacx\xlongequal[]{(\ref{4})}by^{*}x^{*}acab\in Scab,
\end{split}
\end{equation*}
therefore, $a$ is left $(b, c)$-invertible. Thus, $a$ is $(b, c)$-invertible and $a^{||(b, c)}=by^{*}x^{*}ac=by$ by Lemma \ref{lem11}.  In the similar way, $a$ is  $(c, b)$-invertible and $a^{||(c, b)}=cx$.

(2)$\Rightarrow$(1). It is clear.
\end{proof}

Take $b=c=d$ in Theorem \ref{the1}. Then we have the following corollary, which yields the result of  \cite[Theorem 3.8]{wang}. Moreover, we also prove that $d\in S^{(1,4)}$ and give the expression of $a^{||d}$.

\begin{corollary}\label{cor1} Let $a, d\in S$. If $(ad)^{*}=ad$, then
\begin{itemize}
\item[\rm(1)]
 $a$ is left invertible along $d$ if and only if $a$ is invertible along $d$. In this case,  $d\in S^{(1,4)}$ and $a^{||d}=xd$, where $d=xdad$.

\item[\rm(2)] $a$ is right invertible along $d$ if and only if $a$ is invertible along $d$. In this case,  $d\in S^{(1,4)}$ and  $a^{||d}=dx$, where $d=dadx$.
\end{itemize}
\end{corollary}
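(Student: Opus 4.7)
The plan is to specialize Theorem \ref{the1} to the case $b=c=d$. Under this substitution, the two hypotheses $(ab)^{*}=ab$ and $(ac)^{*}=ac$ collapse to the single hypothesis $(ad)^{*}=ad$; being simultaneously left (resp.\ right) $(b,c)$- and $(c,b)$-invertible collapses to being left (resp.\ right) invertible along $d$; being simultaneously $(b,c)$- and $(c,b)$-invertible collapses to being invertible along $d$; and the formulas $a^{||(b,c)}=xc$ with $b=xcab$ and $a^{||(b,c)}=by$ with $b=bacx$ become $a^{||d}=xd$ with $d=xdad$ and $a^{||d}=dx$ with $d=dadx$, respectively. Hence the equivalences in (1) and (2), together with the stated formulas for $a^{||d}$, are immediate from parts (I) and (II) of Theorem \ref{the1}.

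The only additional claim is $d\in S^{(1,4)}$. For part (1), I would start from $d=xdad$, apply the involution, and use $d^{*}a^{*}=(ad)^{*}=ad$ to obtain $d^{*}=d^{*}a^{*}d^{*}x^{*}=add^{*}x^{*}$, and hence $d=xdd^{*}a^{*}$. I would then invoke the symmetry identity $xdd^{*}=dd^{*}x^{*}$---which is precisely equation (\ref{1}) from the proof of Theorem \ref{the1} specialized to $b=c=d$---to rewrite this as $d=dd^{*}x^{*}a^{*}=dd^{*}(ax)^{*}$. By Lemma \ref{lem6}(2), $ax$ is then a $\{1,4\}$-inverse of $d$.

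For part (2) the analogous computation is even more direct, as no symmetry identity is required: from $d=dadx$ and $d^{*}a^{*}=ad$ I would get $d^{*}=x^{*}d^{*}a^{*}d^{*}=x^{*}add^{*}$, whence $d=dd^{*}a^{*}x=dd^{*}(x^{*}a)^{*}$; Lemma \ref{lem6}(2) again certifies $x^{*}a$ as a $\{1,4\}$-inverse of $d$, so $d\in S^{(1,4)}$.

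I do not foresee any serious obstacle. The equivalences are purely formal specializations of Theorem \ref{the1}, and the claim $d\in S^{(1,4)}$ in each part reduces to a short involution calculation followed by an application of Lemma \ref{lem6}(2). The only point that needs care is, in part (1), remembering to use the symmetry relation $xdd^{*}=dd^{*}x^{*}$ already available from the proof of Theorem \ref{the1}; without it one only reaches $d=xdd^{*}a^{*}$, which is not yet in the form demanded by Lemma \ref{lem6}(2).
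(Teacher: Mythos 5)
Your proposal is correct and follows essentially the same route as the paper: specialize Theorem \ref{the1} to $b=c=d$ for the equivalences and the formulas $a^{||d}=xd$, $a^{||d}=dx$, then verify $d=dd^{*}(ax)^{*}$ (resp.\ $d=dd^{*}(x^{*}a)^{*}$) and invoke Lemma \ref{lem6}(2). The only difference is in part (1), where the paper imports the identity $d^{*}=axdd^{*}$ from the proof of \cite[Theorem 3.8]{wang}, whereas you derive the equivalent fact self-containedly via the symmetry relation $xdd^{*}=dd^{*}x^{*}$ from equation (\ref{1}); your computation is valid and arguably preferable for being self-contained.
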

\begin{proof}
(1). Let $b=c=d$ in Theorem \ref{the1}(I), we have $a^{||d}=xd$, where $d=xdad$.  From the proof of \cite[Theorem 3.8]{wang}, we can see that $d^{*}=axdd^{*}$. Thus, $d=dd^{*}(ax)^{*}$ and $d\in S^{(1,4)}$ by Lemma \ref{lem6}(2).

(2). Let $b=c=d$ in Theorem \ref{the1}(II), we obtain $a^{||d}=dx$, where $d=dadx$. Then $d=dadx=d(ad)^{*}x=dd^{*}a^{*}x$, thus $d\in S^{(1,4)}$ by Lemma \ref{lem6}(2).
\end{proof}

\begin{remark}
 If $(ad)^{*}=ad$,  $a$ is  invertible along $d$, then  $d\in S^{(1,4)}$ from Corollary \ref{cor1}. But $d\notin S^{(1,3)}$.
For example: let $S=\mathbb{C}^{2\times 2}$ and the involution be the  transpose. Let  $d=\left[\begin{matrix}
1 & 0\\
i & 0
\end {matrix}
\right],$
$a=\left[\begin{matrix}
1 & 0\\
-i & 1
\end {matrix}
\right].$ Then $(ad)^{*}=ad$, $(da)^{*}\neq da$ and $a$ is  invertible along $d$, but $d$ is not $\{1,3\}$-invertible.
\end{remark}

Take $b=a$, $c=a^{*}$ in Theorem \ref{the1}, then we have that $a$ is one-sided core invertible if and only if $a$ is core invertible  under condition $(a^{2})^{*}=a^{2}$. Moreover, we can prove that $a$ is one-sided core invertible if and only if $a$ is  an EP element.

\begin{theorem}\label{the10} Let $a\in S$. If $(a^{2})^{*}=a^{2}$. Then the following  conditions are equivalent:
\begin{itemize}
\item[\rm(1)] $a$ is left core invertible;

\item[\rm(2)] $a$ is right core invertible;

\item[\rm(3)] $a$ is EP.
\end{itemize}
\end{theorem}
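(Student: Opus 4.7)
The plan is to prove $(3) \Rightarrow (1)$ and $(3) \Rightarrow (2)$ immediately from the definitions, since if $a$ is EP then $a^{\#} = a^{\dag}$ acts as a core inverse that is simultaneously a left and right core inverse. The substantive direction is $(1) \Rightarrow (3)$; the converse $(2) \Rightarrow (3)$ follows by an entirely dual argument via Theorem~\ref{the1}(II).

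For $(1) \Rightarrow (3)$, suppose there is $u \in S$ with $ua^*a^2 = a$. Since $(a^2)^* = a^2$ gives $(a^*)^2 = a^2$, this becomes $u(a^*)^3 = a$, and its adjoint is $a^* = a^3 u^*$. I first establish the auxiliary identity $ua^4 = a^4 u^*$ by computing $aa^*$ in two ways, using $(a^*)^4 = a^4$: $aa^* = u(a^*)^4 = ua^4$ and $aa^* = a \cdot a^3 u^* = a^4 u^*$. The strategy is to invoke Theorem~\ref{the1}(I) with $b = a$ and $c = a^*$: both $(ab)^* = ab$ and $(ac)^* = ac$ are satisfied, left $(a, a^*)$-invertibility is given, and I need to produce left $(a^*, a)$-invertibility, which in this setting means $a^* \in S a^2 a^*$.

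For the latter, I would expand $a^3 = u(a^*)^3 a^2 = ua^*a^4$ (using $(a^*)^3 = a^*a^2$), so $a^* = a^3 u^* = ua^*a^4 u^* = (ua^*u)a^4$ via $a^4 u^* = ua^4$. Since $a^4 = a^*(a^*)^3$, we then have $a^* = (ua^*ua^*)(a^*)^3 \in S(a^*)^3 = Sa^2 a^*$, as required. Theorem~\ref{the1}(I) now shows that $a$ is both $(a, a^*)$- and $(a^*, a)$-invertible, and Lemma~\ref{lem9} gives $a \in S^{\dag} \cap S^{\#}$.

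To close via Lemma~\ref{lem8}, it suffices to show that $aa^{\#}$ is Hermitian. Taking adjoints of the group identities $a^{\#}a^2 = a^2 a^{\#} = a$ and using $(a^*)^2 = a^2$, I would obtain $a^2(a^{\#})^* = (a^{\#})^*a^2 = a^*$, so $a^* \in aS \cap Sa$. A brief calculation using $(a^{\#})^*a = a^*a^{\#}$ then places $(aa^{\#})^*$ into $aS$, and since $aa^{\#}$ is a left identity on $aS$, one has $aa^{\#}(aa^{\#})^* = (aa^{\#})^*$. But $aa^{\#}(aa^{\#})^*$ is automatically self-adjoint (of the form $XX^*$), so $(aa^{\#})^*$ is Hermitian, forcing $(aa^{\#})^* = aa^{\#}$. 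The main obstacle is the middle step: the naive adjoint $a^* = a^3 u^* \in a^3 S$ does not visibly lie in $S(a^*)^3$, and extracting the witness for left $(a^*, a)$-invertibility requires precisely the substitutions afforded by the hypothesis $(a^*)^2 = a^2$ together with the identity $ua^4 = a^4 u^*$.
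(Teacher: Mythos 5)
Your direction $(1)\Rightarrow(3)$ is correct and follows the same overall strategy as the paper: reduce to left $(a^{*},a)$-invertibility, invoke Theorem~\ref{the1}(I) to get both core and dual core invertibility, pass to $a\in S^{\dag}\cap S^{\#}$ via Lemma~\ref{lem9}, and finish with Lemma~\ref{lem8}. Your route to the key membership $a^{*}\in Sa^{2}a^{*}$ (through the commutation $ua^{4}=a^{4}u^{*}$, giving $a^{*}=(ua^{*}ua^{*})(a^{*})^{3}$) is a valid variant of the paper's computation ($a^{*}=(ax)^{2}a^{2}a^{*}$, obtained from $a=a^{3}((ax)^{2})^{*}$), and your final verification that $aa^{\#}$ is Hermitian — using $(aa^{\#})^{*}\in aS$ and the self-adjointness of $aa^{\#}(aa^{\#})^{*}$ — is cleaner than the paper's explicit computation of $a^{\#}=a((ax)^{*})^{2}$. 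The direction $(3)\Rightarrow(1),(2)$ is indeed immediate.

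The genuine gap is your dismissal of $(2)\Rightarrow(3)$ as ``an entirely dual argument.'' It is not: the pair $(a,a^{*})$ is asymmetric, and applying the involution does not carry one hypothesis to the other. Concretely, under $(a^{*})^{2}=a^{2}$, left core invertibility of $a$ unwinds (as you use) to $a\in S(a^{*})^{3}$, whereas right core invertibility unwinds, after taking adjoints of $a^{*}=a^{*}a^{2}y$, to $a=y^{*}a^{3}\in Sa^{3}$ — a different condition, since $(a^{*})^{3}$ and $a^{3}$ need not coincide even though $(a^{*})^{2}=a^{2}$. Consequently your auxiliary identity does not mirror over: the analogue of $ua^{4}=a^{4}u^{*}$ becomes $a^{4}y=y^{*}a^{4}\ (=a^{2})$, and from $a=y^{*}a^{3}$ one reaches $a=a^{3}y=a^{2}(ay)$ but has no visible reason why $ay\in a^{*}S$, which is what $a\in a^{2}a^{*}S$ (right $(a^{*},a)$-invertibility) requires. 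The implication is true, but it needs its own computation; the paper supplies one by first deriving $y^{*}a^{2}=a^{2}y$, rewriting $a^{*}=(ya)^{*}a^{2}$, and iterating this substitution three times to obtain $a^{*}=((ya)^{*})^{3}a^{2}(a^{*})^{2}$, whence $a=a^{2}a^{*}\cdot a^{*}(ya)^{3}\in a^{2}a^{*}S$. You would need to carry out this (or an equivalent) right-handed argument explicitly before invoking Theorem~\ref{the1}(II).
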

\begin{proof}
(1)$\Rightarrow$(3). Since $a$ is left core invertible, there exists $x\in S$ such that $a=xa^{*}a^{2}$. Then from $(a^{2})^{*}=a^{2}$ and
$$a^{2}x^{*}=(xa^{*}a^{2})ax^{*}=(xa^{*})a^{2}(xa^{*})^{*},$$
we have
\begin{equation}\label{5}
 a^{2}x^{*}=(a^{2}x^{*})^{*}=xa^{2}.
\end{equation}
And
\begin{equation}\label{6}
 a^{*}a^{2}=a^{*}(a^{2})^{*}=(a^{2})^{*}a^{*}=a^{2}a^{*}.
\end{equation}
Hence
$$a=xa^{*}a^{2}\xlongequal[]{(\ref{6})}xa^{2}a^{*}\xlongequal[]{(\ref{5})}a^{2}x^{*}a^{*}=a^{2}(ax)^{*}.$$
Then $a=a^{2}(ax)^{*}=a(a^{2}(ax)^{*})(ax)^{*}=a^{3}((ax)^{2})^{*}$. From $$a^{*}=(a^{3}((ax)^{2})^{*})^{*}=(ax)^{2}(a^{2})^{*}a^{*}=(ax)^{2}a^{2}a^{*}\in Sa^{2}a^{*},$$
we have $a$ is left $(a^{*}, a)$-invertible. Hence,  by Theorem \ref{the1}, $a^{\tiny\textcircled{\tiny\#}}$ and $a_{\tiny\textcircled{\tiny\#}}$ exist and
\begin{equation*}
a^{\tiny\textcircled{\tiny\#}}=xa^{*} ~~and ~~a_{\tiny\textcircled{\tiny\#}}=(ax)^{2}a.
\end{equation*}
Since $(aa^{\tiny\textcircled{\tiny\#}})^{*}=aa^{\tiny\textcircled{\tiny\#}}$, we have $axa^{*}=(axa^{*})^{*}=ax^{*}a^{*}$. From Lemma \ref{lem9}, we have $a\in S^{\#}$, and since $a=a^{2}(ax)^{*}$, we obtain $a^{\#}=a((ax)^{*})^{2}$. Thus,
$$aa^{\#}=aa((ax)^{*})^{2}=a^{2}(ax)^{*}(ax)^{*}=a(ax)^{*}=ax^{*}a^{*}=axa^{*}=(ax^{*}a^{*})^{*}=(aa^{\#})^{*}.$$
Then $a$ is EP by Lemma \ref{lem8} and $a^{\#}=a^{\dag}=a^{\tiny\textcircled{\tiny\#}}=a_{\tiny\textcircled{\tiny\#}}=xa^{*}=(ax)^{2}a$.

(2)$\Rightarrow$(3).  Since $a$ is right core invertible, there exists $y\in S$ such that $a^{*}=a^{*}a^{2}y$. Then from $(a^{2})^{*}=a^{2}$ and
$$y^{*}a^{2}=y^{*}a^{*}a^{*}=y^{*}a^{*}a^{*}a^{2}y=y^{*}(a^{*})^{2}a^{2}y,$$
we have
\begin{equation}\label{7}
 y^{*}a^{2}=(y^{*}a^{2})^{*}=a^{2}y.
\end{equation}
Hence
$$a^{*}=a^{*}a^{2}y\xlongequal[]{(\ref{7})}a^{*}y^{*}a^{2}=(ya)^{*}a^{2}.$$
Then
\begin{equation*}
\begin{split}
a^{*}&=(ya)^{*}a^{2}=(ya)^{*}a^{*}a^{*}=(ya)^{*}((ya)^{*}a^{2})a^{*}=((ya)^{*})^{2}a^{*}(a^{*})^{2}\\
&=((ya)^{*})^{2}(ya)^{*}a^{2}(a^{*})^{2}=((ya)^{*})^{3}a^{2}(a^{*})^{2}.\\
\end{split}
\end{equation*}
Thus, $a=a^{2}a^{*}a^{*}(ya)^{3}\in a^{2}a^{*}S$,
we have that $a$ is right $(a^{*}, a)$-invertible. Hence,  by Theorem \ref{the1}, $a^{\tiny\textcircled{\tiny\#}}$ and $a_{\tiny\textcircled{\tiny\#}}$ exist. Since $a^{*}=(ya)^{*}a^{2}=(ya)^{*}(a^{2})^{*}=(a^{2}ya)^{*}$, we have $a=a^{2}ya$. Thus
\begin{equation*}
a^{\tiny\textcircled{\tiny\#}}=ay ~~and ~~a_{\tiny\textcircled{\tiny\#}}=a(ya)^{2}.
\end{equation*}

From Lemma \ref{lem9}, we have $a\in S^{\#}$, and since $a=a^{2}ya$, we obtain $a^{\#}=a(ya)^{2}=a_{\tiny\textcircled{\tiny\#}}$. Thus,
$$a^{\#}a=a_{\tiny\textcircled{\tiny\#}}a=(a_{\tiny\textcircled{\tiny\#}}a)^{*}=(a^{\#}a)^{*}.$$
Then $a$ is EP by Lemma \ref{lem8} and $a^{\#}=a^{\dag}=a^{\tiny\textcircled{\tiny\#}}=a_{\tiny\textcircled{\tiny\#}}=ay=a(ya)^{2}$.

(3)$\Rightarrow$(1) and (2). Since $a$ is EP, we have that $a$ is core invertible, therefore, $a$ is left and right core invertible.
\end{proof}

%%%%%%%%%%%%%%%%%%%%%%%%%%%%%%%%%%%%%%%%%%%%%%%%%%%%%%%%%%%%%%%%%%%%%
\section { \bf When $ba$ is  $(c,b)$-invertible and $ca$ is $(b,c)$-invertible?}
In this section,  we give some equivalent conditions that $ba$ is   $(c, b)$-invertible and  $ca$ is $(b, c)$-invertible under conditions $(ab)^{*}=ab$ and $(ac)^{*}=ac$ in $S$.  In \cite[Theorem 3.1]{chen}, Chen et al. proved that $a\in S^{\dag}$ and $aS=a^{2}S$ if and only if $a^{*}\in S^{\dag}$  and $Sa^{*}=S(a^{2})^{*}$ if and only if $a^{*}a$ is left invertible along $a^{*}$. We first extend this result to $(b,c)$-inverses in the following theorem.
\begin{theorem} \label{the2}Let $a, b, c\in S$. If $(ab)^{*}=ab$, $(ac)^{*}=ac$. Then the following conditions are equivalent:
\begin{itemize}
\item[\rm(1)] $a$ is both $(b, c)$-invertible and $(c, b)$-invertible, $Sb=Sb^{2}, $ $Sc=Sc^{2}$;

\item[\rm(2)] $ba$ is  left $(c, b)$-invertible and  $ca$ is left $(b, c)$-invertible.
\end{itemize}
\end{theorem}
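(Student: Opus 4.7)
The plan is to reduce the theorem to Theorem~\ref{the1} together with Lemma~\ref{lem10}, after unpacking what the hypotheses in (2) concretely mean. Directly from the definitions of left $(b,c)$- and left $(c,b)$-invertibility, ``$ba$ is left $(c,b)$-invertible'' translates to $c \in Sb \cdot (ba) \cdot c = Sb^{2}ac$, and ``$ca$ is left $(b,c)$-invertible'' translates to $b \in Sc \cdot (ca) \cdot b = Sc^{2}ab$. Once these are on the table, the proof splits into two short, symmetric passes, and the involution hypotheses $(ab)^{*}=ab$ and $(ac)^{*}=ac$ enter only through the appeal to Theorem~\ref{the1} in the direction (2)$\Rightarrow$(1).

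For (2)$\Rightarrow$(1), I would write $b = vc^{2}ab$ and $c = ub^{2}ac$ for some $u,v\in S$. Regrouping, $b = (vc)\cdot cab$ and $c = (ub)\cdot bac$, so $a$ is simultaneously left $(b,c)$-invertible and left $(c,b)$-invertible, with distinguished left inverses $vc\in Sc$ and $ub\in Sb$. Theorem~\ref{the1}(I) then yields that $a$ is both $(b,c)$- and $(c,b)$-invertible, with the explicit formulas $a^{||(b,c)} = (vc)c = vc^{2}$ and $a^{||(c,b)} = (ub)b = ub^{2}$. Finally, Lemma~\ref{lem10} gives $Sc = Sa^{||(b,c)} = Svc^{2}\subseteq Sc^{2}$ and $Sb = Sa^{||(c,b)} = Sub^{2}\subseteq Sb^{2}$, and the reverse containments are automatic.

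For (1)$\Rightarrow$(2), set $y=a^{||(b,c)}$ and $y'=a^{||(c,b)}$. Lemma~\ref{lem10} says $Sy=Sc$ and $Sy'=Sb$, which by hypothesis equal $Sc^{2}$ and $Sb^{2}$ respectively. Hence $y = wc^{2}$ and $y' = w'b^{2}$ for some $w,w'\in S$, and the outer-inverse equations $yab=b$ and $y'ac=c$ immediately give $b = wc^{2}ab \in Sc^{2}ab$ and $c = w'b^{2}ac \in Sb^{2}ac$, which are precisely the left $(b,c)$-invertibility of $ca$ and the left $(c,b)$-invertibility of $ba$.

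The one delicate point is the bookkeeping in (2)$\Rightarrow$(1): one must retain the ``strengthened'' witnesses $x=vc$ and $y=ub$ (rather than arbitrary witnesses drawn from $Scab$ and $Sbac$) so that the formulas $a^{||(b,c)}=xc$ and $a^{||(c,b)}=yb$ supplied by Theorem~\ref{the1} produce the extra factors of $c$ and $b$ needed to extract $Sb=Sb^{2}$ and $Sc=Sc^{2}$ via Lemma~\ref{lem10}. No serious obstacle is expected beyond matching these witnesses; everything else is routine manipulation of principal left ideals, and in particular the direction (1)$\Rightarrow$(2) is purely ideal-theoretic and does not invoke the involution hypotheses at all.
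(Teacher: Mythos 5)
Your proof is correct and follows essentially the same route as the paper's: the forward direction is the same ideal-theoretic insertion of the extra factor of $b$ or $c$ (the paper phrases it via $Sb=Scab$ and $Sc=Sbac$ rather than via Lemma \ref{lem10}, a cosmetic difference), and the converse is exactly the paper's argument of feeding the witnesses $vc\in Sc$, $ub\in Sb$ into Theorem \ref{the1}(I) so that $a^{||(b,c)}=vc^{2}$ and $a^{||(c,b)}=ub^{2}$ yield $Sc=Sc^{2}$ and $Sb=Sb^{2}$. Your closing observations (that the delicate point is keeping the strengthened witnesses, and that (1)$\Rightarrow$(2) needs no involution) match the paper's proof as well.
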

\begin{proof}
(1)$\Rightarrow$(2). Since  $a$ is  both $(b, c)$-invertible and $(c, b)$-invertible, we have \\
$$Sb=Scab ~~~\text{and}~~~ Sc=Sbac.$$
Combining with $Sb=Sb^{2},$ $Sc=Sc^{2}$, which gives that \\
$$Sb=Sc^{2}ab ~~~\text{and}~~~ Sc=Sb^{2}ac.$$
Therefore, $ba$ is  left $(c, b)$-invertible and  $ca$ is left $(b, c)$-invertible.

(2)$\Rightarrow$(1). From $Sb=Sc^{2}ab\subseteq Scab$,  $Sc=Sb^{2}ac\subseteq Sbac$ and Theorem \ref{the1}, we have that $a$ is  both $(b, c)$-invertible and $(c, b)$-invertible. And there exist $t, s\in S$ such that\\
$$b=tc^{2}ab=tc(cab)~~~ \text{and}~~~ c=sb^{2}ac=sb(bac).$$
By the proof of Theorem \ref{the1}(I), we obtain\\
$$Sc\subseteq Stc^{2}\subseteq Sc^{2}\subseteq Sc,$$
thus, $Sc=Sc^{2}$. In the similar way, we have $Sb=Sb^{2}$.
\end{proof}

Let $b=c=d$ in Theorem \ref{the2}, we obtain the following corollary.
\begin{corollary}\label{cor3}
Let $a, d\in S$, $(ad)^{*}=ad$. Then the following  conditions are equivalent:
\begin{itemize}
\item[\rm(1)] $a^{||d}$ exists,  $Sd=Sd^{2}$;

\item[\rm(2)] $da$ is left invertible along $d$.
\end{itemize}
\end{corollary}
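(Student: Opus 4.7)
The plan is to obtain Corollary \ref{cor3} as a direct specialization of Theorem \ref{the2} with $b=c=d$, since all of the asymmetric hypotheses and conclusions collapse into symmetric ones in this case. Recall from Section 2 that Drazin's notation sets $a^{||d}=a^{||(d,d)}$, so invertibility (respectively, left invertibility) along $d$ is exactly $(d,d)$-invertibility (respectively, left $(d,d)$-invertibility).

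First I would verify the hypotheses of Theorem \ref{the2} at $b=c=d$: the two involution conditions $(ab)^{*}=ab$ and $(ac)^{*}=ac$ both reduce to the single assumption $(ad)^{*}=ad$, which is precisely the hypothesis of the corollary. Next, for the $(1)\Rightarrow(2)$ direction, I would specialize condition (1) of Theorem \ref{the2}: "$a$ is both $(b,c)$-invertible and $(c,b)$-invertible with $Sb=Sb^{2}$ and $Sc=Sc^{2}$" collapses to "$a^{||d}$ exists and $Sd=Sd^{2}$". Applying the theorem, we obtain that $ba=da$ is left $(c,b)$-invertible and $ca=da$ is left $(b,c)$-invertible, both of which are just "$da$ is left invertible along $d$".

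For the reverse direction $(2)\Rightarrow(1)$, I would read condition (2) of Theorem \ref{the2} with $b=c=d$: both statements "$ba$ is left $(c,b)$-invertible" and "$ca$ is left $(b,c)$-invertible" become the single statement "$da$ is left invertible along $d$", which is our hypothesis. The conclusion then gives that $a$ is $(d,d)$-invertible (i.e.\ $a^{||d}$ exists) and $Sd=Sd^{2}$, as required.

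Since the corollary is a pure specialization, no serious obstacle is expected; the only point worth being explicit about is the dictionary between $(d,d)$-inverses and inverses along $d$ from Section 2, which justifies pattern-matching the hypotheses and conclusions of Theorem \ref{the2} to those of Corollary \ref{cor3}.
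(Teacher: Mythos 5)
Your proposal is correct and matches the paper exactly: the authors likewise obtain Corollary \ref{cor3} by setting $b=c=d$ in Theorem \ref{the2} and invoking the identification $a^{||d}=a^{||(d,d)}$ from Section 2, offering no further argument. Your explicit check that left $(d,d)$-invertibility coincides with left invertibility along $d$ is a worthwhile (if routine) addition that the paper leaves implicit.
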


\begin{remark}
In Corollary  \ref{cor3}, the condition $Sd=Sd^{2}$ cannot be deleted.
For example: let $S=\mathbb{C}^{2\times 2}$ and the involution be the conjugate transpose.  Take $a=\left[\begin{matrix}
1 & i\\
i & -1
\end {matrix}
\right],$ $d=a^{*}$. Since $a\in S^{\dag}$, it gives that $a^{||d}$ exists. From $d^{2}ad=0$, we have $d\notin Sd^{2}ad$, thus $da$ is not left invertible along $d$.
\end{remark}

 Since $Sa=Sa^{2}$ if and only if  $a^{*}a$ is left invertible along $a$ under the condition $a\in S^{\dag}$ in \cite{chen},  we next generalize this result to the cases of $(b,c)$-inverses in the following result.
\begin{proposition}\label{the3}Let $a, b, c\in S$.
\begin{itemize}
\item[\rm(1)] If $a$ is both left $(b, c)$-invertible and left $(c, b)$-invertible, then\\
$Sb=Sb^{2}, $ $Sc=Sc^{2}$ if and only if $ab$ is  left $(b, c)$-invertible and  $ac$ is left $(c, b)$-invertible.

\item[\rm(2)] If $a$ is both right $(b, c)$-invertible and right $(c, b)$-invertible, then\\
 $bS=b^{2}S,$ $cS=c^{2}S$ if and only if $ba$ is  right $(c, b)$-invertible and  $ca$ is right $(b, c)$-invertible.
\end{itemize}
\end{proposition}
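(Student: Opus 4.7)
The plan is to argue purely at the level of one-sided principal ideals; the involution plays no role here, so neither the hypothesis $(ab)^{*}=ab$ (which is in fact not even assumed in this statement) nor any of Lemmas \ref{lem6}--\ref{lem9} will enter. The one ingredient I need is the definitional unfolding of one-sided $(b,c)$-invertibility recalled in the Preliminaries: the assertion ``$ab$ is left $(b,c)$-invertible'' unpacks to $b \in Sc(ab)b = Scab^{2}$, and the three companion statements unpack analogously to $c \in Sbac^{2}$ (for ``$ac$ is left $(c,b)$-invertible''), $c \in c^{2}abS$ (for ``$ca$ is right $(b,c)$-invertible''), and $b \in b^{2}acS$ (for ``$ba$ is right $(c,b)$-invertible'').

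For part (1), I would begin from the hypothesis that $a$ is left $(b,c)$- and left $(c,b)$-invertible, which translates to $b \in Scab$ and $c \in Sbac$, i.e.\ $Sb \subseteq Scab$ and $Sc \subseteq Sbac$. Assuming $Sb = Sb^{2}$ and $Sc = Sc^{2}$, I multiply these inclusions on the right by $b$ and $c$ respectively to obtain $Sb = Sb^{2} \subseteq Scab^{2}$ and $Sc = Sc^{2} \subseteq Sbac^{2}$, which are exactly the required conclusions. Conversely, if $b \in Scab^{2}$, writing $b = (xca)\,b^{2}$ for some $x \in S$ gives $b \in Sb^{2}$, hence $Sb = Sb^{2}$; the identical manoeuvre with the roles of $b$ and $c$ swapped yields $Sc = Sc^{2}$.

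Part (2) is the mirror image on the other side. Right $(b,c)$- and $(c,b)$-invertibility of $a$ translate to $cS \subseteq cabS$ and $bS \subseteq bacS$; multiplying on the left by $c$ and $b$ respectively and then invoking $cS = c^{2}S$, $bS = b^{2}S$ places $c$ in $c^{2}abS$ and $b$ in $b^{2}acS$, which are the required right-sided inverses for $ca$ and $ba$. The converse is again immediate: $c \in c^{2}abS$ forces $cS \subseteq c^{2}S$, hence $cS = c^{2}S$, and similarly for $b$. There is essentially no genuine obstacle; the only point requiring care is the bookkeeping, since the coordinate slots occupied by $b$ and $c$ swap both between the $(b,c)$ and $(c,b)$ versions and again when moving from the left- to the right-sided notion, and a single transposition would spoil the chain of inclusions.
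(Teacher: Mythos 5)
Your proposal is correct and follows essentially the same route as the paper: both directions reduce to the definitional identities $b\in Scab$, $c\in Sbac$ (resp.\ $c\in cabS$, $b\in bacS$) combined with the regularity conditions $Sb=Sb^{2}$, $Sc=Sc^{2}$ (resp.\ $bS=b^{2}S$, $cS=c^{2}S$), exactly as in the paper's element-wise computation with $b=xcab$, $b=t_{1}b^{2}$, etc. Your observation that the involution plays no role here is also consistent with the paper, which states the proposition without the hypotheses $(ab)^{*}=ab$, $(ac)^{*}=ac$.
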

\begin{proof}
(1).  Since $a$ is both left $(b, c)$-invertible and left $(c, b)$-invertible, there exist $x, y\in S$ such that $b=xcab$ and $c=ybac$.

From $Sb=Sb^{2}, $ $Sc=Sc^{2}$, there exist $t_{1}, t_{2}\in S$ such that $b=t_{1}b^{2}$ and $c=t_{2}c^{2}$. Then
$$b=t_{1}b^{2}=t_{1}(xcab)b=t_{1}xcab^{2}\in Scab^{2},$$
$$c=t_{2}c^{2}=t_{2}(ybac)c=t_{2}ybac^{2}\in Sbac^{2}.$$
Therefore, $ab$ is  left $(b, c)$-invertible and  $ac$ is left $(c, b)$-invertible.

Conversely,  Since $ab$ is  left $(b, c)$-invertible and  $ac$ is left $(c, b)$-invertible, we have \\
$$Sb=Scab^{2}\subseteq Sb^{2},$$
$$Sc=Sbac^{2}\subseteq Sc^{2}.$$
The proof is completed.

(2). It is similar to (1).
\end{proof}

Take $b=c=d$ in Proposition \ref{the3}, then the following result is valid for any integer $k\geq 1$.
\begin{corollary}\label{th4}
Let $a, d\in S$ with $k\geq1$.
\begin{itemize}
\item[\rm(1)] If $a$ is left invertible along $d$. Then\\
~~~$Sd=Sd^{2}$ if and only if $(ad)^{k}$ is left invertible along $d$.

\item[\rm(2)] If $a$ is right invertible along $d$. Then\\
~~~$dS=d^{2}S$ if and only if $(da)^{k}$ is right invertible along $d$.
\end{itemize}
\end{corollary}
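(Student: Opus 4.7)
The statement is the $b=c=d$ specialization of Proposition~\ref{the3}, extended from the single exponent $ad$ (resp.\ $da$) to an arbitrary power $(ad)^{k}$ (resp.\ $(da)^{k}$). I will prove (1) directly by a short iteration argument, and note that (2) follows by the left--right mirror construction.

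For (1) $(\Rightarrow)$, I extract witnesses $x,t\in S$ with $d=xdad$ (from $a$ being left invertible along $d$) and $d=td^{2}$ (from $Sd=Sd^{2}$). The central step is to iterate $d=xdad$: substituting $d$ by $xdad$ once in the middle position gives $d=x(xdad)ad=x^{2}d(ad)^{2}$, and a routine induction on $k$ yields $d=x^{k}d(ad)^{k}$ for every $k\ge 1$. Right-multiplying $d=td^{2}$ by this identity produces
\[
d=td\cdot d=t\bigl(x^{k}d(ad)^{k}\bigr)d=(tx^{k})\cdot d(ad)^{k}d\in Sd(ad)^{k}d,
\]
which is exactly the desired left invertibility of $(ad)^{k}$ along $d$. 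For the converse, $(ad)^{k}$ left invertible along $d$ gives $d\in Sd(ad)^{k}d$; since $d(ad)^{k}d=d(ad)^{k-1}a\,d^{2}\in Sd^{2}$, this yields $Sd\subseteq Sd^{2}$, and the reverse inclusion is automatic, so $Sd=Sd^{2}$.

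Part (2) is proved by the mirror argument. From $a$ right invertible along $d$ I get $d=dadz=d(ad)z$, and iterating produces $d=d(ad)^{k}z^{k}=(da)^{k}dz^{k}$, using the trivial identity $d(ad)^{k}=(da)^{k}d$. Combined with $d=d^{2}s$ from $dS=d^{2}S$, this places $d\in d(da)^{k}dS$. Conversely, from $d\in d(da)^{k}dS$ and the factorization $d(da)^{k}d=d^{2}a(da)^{k-1}d\in d^{2}S$, the regularity $dS=d^{2}S$ follows at once. There is no substantial obstacle in the argument; the only delicate point is choosing the correct iteration pattern so that the resulting word factors as $d(ad)^{k}d$ (resp.\ $d(da)^{k}d$), which is ensured by substituting $d$ in a middle position rather than at an endpoint.
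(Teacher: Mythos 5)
Your proposal is correct and follows essentially the same route as the paper: iterate the witness identity $d=xdad$ to obtain $d\in Sd(ad)^{k}d$ (combined with $d\in Sd^{2}$), and for the converse use the factorization $d(ad)^{k}d=(da)^{k}d^{2}\in Sd^{2}$; part (2) is the mirror image, which the paper likewise leaves to symmetry.
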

\begin{proof}
(1). Suppose that $a$ is left invertible along $d$, there exists $t\in S$ such that $d=tdad$.

Since $Sd=Sd^{2}$, there exists $s\in S$ such that $d=sd^{2}$. Then $d=sd^{2}=s(tdad)d=st(tdad)ad^{2}=st^{2}d(ad)^{2}d=\cdots=st^{k}d(ad)^{k}d\in Sd(ad)^{k}d$. Hence, $(ad)^{k}$ is left invertible along $d$.

Conversely, $(ad)^{k}$ is left invertible along $d$ implies that  $Sd=Sd(ad)^{k}d=S(da)^{k}d^{2}\subseteq Sd^{2}$.

(2). It is similar to (1).
\end{proof}

Take $d=a^{*}$ in above result, then we have the following corollary.
\begin{corollary}\emph{\cite[Theorem 3.6]{chen}}
Let $a\in S^{\dag}$, $k\geq1$. Then
\begin{itemize}
\item[\rm(1)] $Sa=Sa^{2}$ if and only if  $(a^{*}a)^{k}$ is left invertible along $a$;

\item[\rm(2)] $aS=a^{2}S$ if and only if $(aa^{*})^{k}$ is right invertible along $a$.

\end{itemize}
\end{corollary}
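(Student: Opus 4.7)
The plan is to derive both parts by setting $d = a^{*}$ in Corollary \ref{th4} and then transporting every ingredient through the involution. Two preliminary observations drive everything. First, the characterisation of $S^{\dag}$ recalled in the introduction (Zhu et al.) says that $a \in S^{\dag}$ holds if and only if $a$ is left invertible along $a^{*}$, if and only if $a$ is right invertible along $a^{*}$; this is exactly the standing hypothesis required to invoke either clause of Corollary \ref{th4} when $d = a^{*}$. Second, applying $*$ turns a left-ideal equality into a right-ideal equality: $Sa^{*} = S(a^{*})^{2}$ is equivalent to $aS = a^{2}S$, and dually $a^{*}S = (a^{*})^{2}S$ is equivalent to $Sa = Sa^{2}$.

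For part (1) I would apply Corollary \ref{th4}(2) with $d = a^{*}$; note the cross-pairing, since the involution swaps sides. This yields
$$a^{*}S = (a^{*})^{2}S \iff (a^{*}a)^{k} \text{ is right invertible along } a^{*}.$$
The left-hand condition is $Sa = Sa^{2}$ by the second observation above. For the right-hand condition, unwinding the definition produces $b \in S$ with $a^{*}(a^{*}a)^{k} b = a^{*}$ and $bS \subseteq a^{*}S$; applying $*$ and using $((a^{*}a)^{k})^{*} = (a^{*}a)^{k}$ converts this into the existence of $c := b^{*}$ with $c(a^{*}a)^{k} a = a$ and $Sc \subseteq Sa$, which is precisely the assertion that $(a^{*}a)^{k}$ is left invertible along $a$.

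Part (2) is obtained symmetrically: invoke Corollary \ref{th4}(1) with $d = a^{*}$ and run the identical involution dictionary. The equivalence $Sa^{*} = S(a^{*})^{2} \iff (aa^{*})^{k}$ left invertible along $a^{*}$ becomes, under $*$, the equivalence $aS = a^{2}S \iff (aa^{*})^{k}$ right invertible along $a$.

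The only real obstacle is a bookkeeping one: because the involution swaps left and right, the left-handed statement in the target corollary must be deduced from the right-handed clause of Corollary \ref{th4}, and vice versa. Once that cross-pairing is spotted, the remainder is a routine application of $*$ to each ideal inclusion and each defining equation.
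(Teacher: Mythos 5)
Your proposal is correct and follows essentially the paper's own (one-line) derivation: the corollary is obtained by specializing Corollary \ref{th4} to $d=a^{*}$, using the recalled fact that $a\in S^{\dag}$ is equivalent to one-sided invertibility of $a$ along $a^{*}$, and then transporting ideals and defining equations through the involution. You correctly make explicit the cross-pairing of clauses that the paper leaves implicit.
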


In Theorem \ref{the2}, we investigate equivalent conditions that $ba$ is  left $(c, b)$-invertible and  $ca$ is left $(b, c)$-invertible under conditions $(ab)^{*}=ab$, $(ac)^{*}=ac$. Next, we consider that $ba$ is   $(c, b)$-invertible and  $ca$ is  $(b, c)$-invertible under the same conditions.
\begin{theorem} \label{the8}Let $a, b, c\in S$. If $(ab)^{*}=ab$, $(ac)^{*}=ac$. Then the following conditions are equivalent:
\begin{itemize}
\item[\rm(1)] $a$ is  both $(b, c)$-invertible and $(c, b)$-invertible, $b\in S^{\#}$, $c\in S^{\#}$;

\item[\rm(2)] $ba$ is   $(c, b)$-invertible and  $ca$ is  $(b, c)$-invertible.
\end{itemize}
\end{theorem}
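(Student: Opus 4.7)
The plan is to leverage Theorem \ref{the2} together with the standard characterization of group invertibility: for any $b\in S$, $b\in S^{\#}$ if and only if $b\in b^{2}S\cap Sb^{2}$, equivalently $bS=b^{2}S$ and $Sb=Sb^{2}$. With this in hand, the hypothesis $b,c\in S^{\#}$ exactly strengthens the one-sided ideal conditions $Sb=Sb^{2}$, $Sc=Sc^{2}$ of Theorem \ref{the2} to their two-sided counterparts; correspondingly, one expects the conclusion to upgrade from the left $(c,b)$- and left $(b,c)$-invertibility of $ba$ and $ca$ (furnished by Theorem \ref{the2}) to full two-sided $(c,b)$- and $(b,c)$-invertibility.

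For (1)$\Rightarrow$(2), I would first apply Theorem \ref{the2} (1)$\Rightarrow$(2) to obtain left $(c,b)$-invertibility of $ba$ and left $(b,c)$-invertibility of $ca$. It remains, by Lemma \ref{lem11}(2), to establish the corresponding right-sided statements, i.e., $b\in b^{2}acS$ and $c\in c^{2}abS$. Since $a$ is $(c,b)$-invertible, Lemma \ref{lem11}(1) yields $b=bacs$ for some $s\in S$; multiplying on the left by $b$ gives $b^{2}=b^{2}acs$. Combining this with $b=b^{2}v$, available from $bS=b^{2}S$, produces $b=b^{2}v=b^{2}acsv\in b^{2}acS$. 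A symmetric computation using $(b,c)$-invertibility of $a$ and $cS=c^{2}S$ delivers $c\in c^{2}abS$, and Lemma \ref{lem11}(2) concludes the implication.

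For (2)$\Rightarrow$(1), observe that $(c,b)$-invertibility of $ba$ and $(b,c)$-invertibility of $ca$ certainly entail the corresponding left-sided hypotheses, so Theorem \ref{the2} (2)$\Rightarrow$(1) simultaneously provides the $(b,c)$- and $(c,b)$-invertibility of $a$ together with the relations $Sb=Sb^{2}$, $Sc=Sc^{2}$. The missing conditions $bS=b^{2}S$, $cS=c^{2}S$ can then be read off from the right-sided half of Lemma \ref{lem11}(1) applied to $ba$ and $ca$: namely $b\in b(ba)cS=b^{2}acS\subseteq b^{2}S$ and $c\in c(ca)bS=c^{2}abS\subseteq c^{2}S$. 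Combined with the one-sided relations already obtained, this yields $b,c\in S^{\#}$. I expect no substantive obstacle here; the only point requiring care is to track which half of the group-invertibility condition is supplied by Theorem \ref{the2} and which half must be recovered from the right sides of the two-sided $(b,c)$-inverse hypotheses on $ba$ and $ca$, so that the argument separates cleanly into a ``left''-type step (handled by Theorem \ref{the2}) and a short ``right''-type computation built from Lemma \ref{lem11}(1).
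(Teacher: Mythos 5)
Your proof is correct and takes essentially the same route as the paper's: the paper also splits the argument into a left-sided half handled by Theorem \ref{the2} and a right-sided half, the latter obtained by citing Proposition \ref{the3}(2), whose proof consists of exactly the computations you carry out directly (e.g.\ $b=b^{2}v=b^{2}acsv\in b^{2}acS$ for (1)$\Rightarrow$(2), and $b\in b^{2}acS\subseteq b^{2}S$ for (2)$\Rightarrow$(1)). The only difference is cosmetic: you inline the content of Proposition \ref{the3}(2) instead of invoking it.
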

\begin{proof}
(1)$\Rightarrow$(2). From Theorem \ref{the2}, we have that $ba$ is left  $(c, b)$-invertible and  $ca$ is left $(b, c)$-invertible.
 $ba$ is right  $(c, b)$-invertible and  $ca$ is right $(b, c)$-invertible by Proposition \ref{the3}(2). Thus, $ba$ is   $(c, b)$-invertible and  $ca$ is  $(b, c)$-invertible.

(2)$\Rightarrow$(1). By Theorem \ref{the2}, we have that  $a$ is both $(b, c)$-invertible and $(c, b)$-invertible, $Sb=Sb^{2}, $ $Sc=Sc^{2}$. From Proposition \ref{the3}(2),
we obtain $bS=b^{2}S,$ $cS=c^{2}S$. Thus $b\in S^{\#}$ and $c\in S^{\#}$.

\end{proof}

Take $b=c=d$ in Theorem \ref{the8}, we have
\begin{corollary}\label{cor4}
Let $a, d\in S$, $(ad)^{*}=ad$. Then the following conditions are equivalent:
\begin{itemize}
\item[\rm(1)] $a^{||d}$ exists,  $d\in S^{\#}$;

\item[\rm(2)] $da$ is  invertible along $d$.
\end{itemize}
\end{corollary}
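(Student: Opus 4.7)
The plan is to derive this corollary as a direct specialization of Theorem \ref{the8} with $b=c=d$. The two hypotheses $(ab)^{*}=ab$ and $(ac)^{*}=ac$ of Theorem \ref{the8} collapse under this substitution to the single hypothesis $(ad)^{*}=ad$, which is exactly the assumption of the corollary, so Theorem \ref{the8} is available in this setting.

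Next, I would translate each side of Theorem \ref{the8} term by term. In condition (1), asking that $a$ be simultaneously $(b,c)$-invertible and $(c,b)$-invertible reduces to asking that $a$ be $(d,d)$-invertible, which by the identification $a^{||d}=a^{||(d,d)}$ recorded in the preliminaries is the existence of $a^{||d}$; the two group-invertibility requirements $b\in S^{\#}$ and $c\in S^{\#}$ merge into the single condition $d\in S^{\#}$. In condition (2), both ``$ba$ is $(c,b)$-invertible'' and ``$ca$ is $(b,c)$-invertible'' become ``$da$ is $(d,d)$-invertible'', i.e., $da$ is invertible along $d$.

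With these translations in hand, the forward implication (1)$\Rightarrow$(2) is an immediate instance of the corresponding direction of Theorem \ref{the8}. For the converse, the single hypothesis that $da$ is invertible along $d$ simultaneously witnesses both parts of condition (2) of Theorem \ref{the8} after the substitution, so Theorem \ref{the8} yields that $a^{||d}$ exists and $d\in S^{\#}$. There is no real obstacle here: the entire argument amounts to confirming that setting $b=c=d$ is consistent with the hypotheses and that the symmetric pairs of conditions collapse correctly, so no further calculation is required.
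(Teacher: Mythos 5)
Your proposal is correct and is exactly the paper's own argument: the corollary is stated there as the immediate specialization $b=c=d$ of Theorem \ref{the8}, with the hypotheses collapsing to $(ad)^{*}=ad$ and the conditions collapsing to the existence of $a^{||d}$ with $d\in S^{\#}$ on one side and the invertibility of $da$ along $d$ on the other. No further comment is needed.
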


Take $b=a,~ c=a^{*}$ in Theorem \ref{the2} and Theorem \ref{the8}. Then  by Theorem \ref{the10}, we have
\begin{corollary}
Let $a\in S$, $(a^{2})^{*}=a^{2}$. Then the following conditions are equivalent:
\begin{itemize}
\item[\rm(1)] $a$ is EP;

\item[\rm(2)] $a^{2}$  is left $(a^{*}, a)$-invertible and $a^{*}a$ is left $(a, a^{*})$-invertible;

\item[\rm(3)] $a^{2}$  is  $(a^{*}, a)$-invertible and $a^{*}a$ is  $(a, a^{*})$-invertible.

\end{itemize}
\end{corollary}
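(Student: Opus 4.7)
The plan is to apply Theorems \ref{the2} and \ref{the8} in the specialization $b=a$, $c=a^{*}$, and then use Theorem \ref{the10} to convert the resulting $(a,a^{*})$- and $(a^{*},a)$-invertibility of $a$ into the EP condition. First I will check that the standing hypotheses $(ab)^{*}=ab$ and $(ac)^{*}=ac$ of those theorems specialize to $(a^{2})^{*}=a^{2}$, which is our assumption, and $(aa^{*})^{*}=aa^{*}$, which is automatic; hence both theorems apply. It will also be convenient to recall that $a\in S^{\tiny\textcircled{\tiny\#}}$ is equivalent to $a$ being $(a,a^{*})$-invertible and $a\in S_{\tiny\textcircled{\tiny\#}}$ to $a$ being $(a^{*},a)$-invertible.

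For (1) $\Rightarrow$ (3), starting from $a$ EP I will use Lemma \ref{lem9} to obtain that $a\in S^{\tiny\textcircled{\tiny\#}}\cap S_{\tiny\textcircled{\tiny\#}}$, so $a$ is simultaneously $(a,a^{*})$- and $(a^{*},a)$-invertible. Then I will observe that $a\in S^{\#}$ (immediate from EP) and that $a^{*}\in S^{\#}$ as well, since the EP property is preserved under the involution via $(a^{\#})^{*}=(a^{*})^{\#}$. All the hypotheses of Theorem \ref{the8} with $b=a$, $c=a^{*}$ are then met, and the theorem yields the $(a^{*},a)$-invertibility of $a^{2}=ba$ together with the $(a,a^{*})$-invertibility of $a^{*}a=ca$, i.e.\ exactly statement (3).

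Implication (3) $\Rightarrow$ (2) is immediate from the fact that a $(b,c)$-inverse is a fortiori a left $(b,c)$-inverse. For (2) $\Rightarrow$ (1), I will note that condition (2) is precisely the right-hand side of Theorem \ref{the2} with $b=a$, $c=a^{*}$; that theorem then returns the $(a,a^{*})$-invertibility of $a$ (along with the side conditions $Sa=Sa^{2}$ and $Sa^{*}=S(a^{*})^{2}$, which we will not need). Hence $a$ is core invertible and in particular left core invertible, and Theorem \ref{the10} closes the loop by concluding that $a$ is EP.

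The whole argument is a clean specialization of results already proved, so I do not anticipate any real obstacle. The only auxiliary fact needing care is the claim $a^{*}\in S^{\#}$ used in the step (1) $\Rightarrow$ (3), which follows routinely from $a\in S^{\#}$ together with $(a^{\#})^{*}=(a^{*})^{\#}$; everything else is bookkeeping of the hypotheses of Theorems \ref{the2}, \ref{the8} and \ref{the10}.
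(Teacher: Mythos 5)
Your proposal is correct and follows exactly the route the paper intends: specializing Theorems \ref{the2} and \ref{the8} at $b=a$, $c=a^{*}$ (noting $(aa^{*})^{*}=aa^{*}$ holds automatically and $(a^{2})^{*}=a^{2}$ is the hypothesis) and closing the loop with Lemma \ref{lem9} and Theorem \ref{the10}. The paper gives only the one-line derivation, and your bookkeeping of the hypotheses, including the routine fact that $a\in S^{\#}$ implies $a^{*}\in S^{\#}$, fills it in correctly.
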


%%%%%%%%%%%%%%%%%%%%%%%%%%%%%%%%%%%%%%%%%%%%%%%%%%%%%%%%%%%%%%%%%%%%%
\section { \bf When $(ad)^{k}$ is invertible along $a$ and $(da)^{k}$ is  invertible along $a$?}
In this section, we consider that $(ad)^{k}$ is invertible along $a$ and $(da)^{k}$ is  invertible along $a$ under conditions $(ad)^{*}=ad$ and  $(da)^{*}=da$.
We start this section with auxiliary lemmas.
\begin{lemma}\emph{\cite[Corollary 2.3]{chen}}\label{lem4}
Let $a\in R$, $k\geq1$, and let $m\in R$ be regular with inner inverse $m^{-}$. Then the following conditions are equivalent:
\begin{itemize}
\item[\rm(1)] $a$ is left invertible along $m$;

\item[\rm(2)] $u=(am)^{k}+1-m^{-}m$ is left invertible;

\item[\rm(3)] $v=(ma)^{k}+1-mm^{-}$ is left invertible.

\end{itemize}
%In this case, $v^{-1}_{l}(ma)^{k-1}m$ is a left inverse of $a$ along $m$.
\end{lemma}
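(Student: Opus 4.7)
The plan is to let $e = m^{-}m$ and $f = mm^{-}$, both idempotents, and exploit the basic consequences of $mm^{-}m = m$: namely $me = m$, $fm = m$, the annihilations $(am)^{k}e = (am)^{k}$ and $f(ma)^{k} = (ma)^{k}$, and the equality of left ideals $Rm = Re$. These yield the structural identities $u(1-e) = 1-e$, $ue = (am)^{k}$, $(1-f)v = 1-f$, and $vm = (ma)^{k}m$, which will drive everything that follows.

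For (1)$\Rightarrow$(2), iterating the relation $m = tmam$ (equivalent to $a$ being left invertible along $m$, i.e.\ $m \in Rmam$) gives $m = t^{k}m(am)^{k}$. Left-multiplying by $m^{-}$ yields $e = s(am)^{k}$ with $s := m^{-}t^{k}m$, and one directly checks $se = s$. The explicit candidate $w = (1-e) + s - (1-e)(am)^{k}s$ then satisfies $w(am)^{k} = e$ and $w(1-e) = 1-e$ by routine computation, so $wu = w(am)^{k} + w(1-e) = e + (1-e) = 1$.

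For (2)$\Rightarrow$(1), from $wu = 1$ I right-multiply by $u(1-e) = 1-e$ to obtain $w(1-e) = 1-e$, forcing $w = (1-e) + we$; hence $mw = mwe$ (since $m(1-e) = 0$). Because $mwe \in Re = Rm$, one can write $mw = rm$ for some $r \in R$ (explicitly, $r = mwm^{-}$). Right-multiplying $wu = 1$ by $ue = (am)^{k}$ instead gives $w(am)^{k} = e$, so $m = me = mw(am)^{k} = r(ma)^{k}m = r(ma)^{k-1}\cdot mam$, showing $m \in Rmam$.

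The equivalence (1)$\Leftrightarrow$(3) is handled symmetrically, and in fact (3)$\Rightarrow$(1) is even more direct: from $wv = 1$ and $vm = (ma)^{k}m$, right-multiplication by $m$ yields $w(ma)^{k}m = m$, hence $m = w(ma)^{k-1}\cdot mam \in Rmam$. For (1)$\Rightarrow$(3), iterating gives $m = t^{k}(ma)^{k}m$, right-multiplication by $m^{-}$ produces $f = t^{k}(ma)^{k}f$, and a mirror construction analogous to the one for $u$ exhibits an explicit left inverse of $v$. The main obstacle I anticipate is the asymmetry between the two perturbations: the identity $vm = (ma)^{k}m$ trivialises (3)$\Rightarrow$(1), whereas (2)$\Rightarrow$(1) requires the extra manoeuvre $mw \in Rm$ (enabled by $Rm = Re$) to upgrade the a priori weaker conclusion $m \in Ram$ to $m \in Rmam$.
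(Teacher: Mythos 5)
This lemma is imported by the paper from \cite[Corollary 2.3]{chen} without proof, so there is no in-paper argument to compare against; I have therefore checked your proposal on its own terms. Your treatment of (1)$\Leftrightarrow$(2) and of (3)$\Rightarrow$(1) is complete and correct: the identities $u(1-e)=1-e$, $ue=(am)^{k}$, $(1-f)v=1-f$, $vm=(ma)^{k}m$ all hold, the explicit left inverse $w=(1-e)+s-(1-e)(am)^{k}s$ of $u$ does satisfy $wu=1$ (using $s(am)^{k}=e$, $se=s$, $(am)^{k}e=(am)^{k}$), and the passage from $wu=1$ back to $m\in Rmam$ via $mw=rm$ with $r=mwm^{-}$ is sound.

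The genuine gap is in (1)$\Rightarrow$(3). The ``mirror construction analogous to the one for $u$'' does not exist as described, for two reasons. First, the literal left--right mirror of a \emph{left}-inverse construction produces a \emph{right} inverse, whereas (3) asserts left invertibility of $v$. Second, the data you extract on the $v$-side is not the mirror of the data on the $u$-side: for $u$ you obtain $e=s(am)^{k}$ exactly, with the idempotent absorbed on the right of $(am)^{k}$; for $v$ you only obtain $f=t^{k}(ma)^{k}f$, where the trailing $f$ cannot be removed (nothing forces $t^{k}(ma)^{k}=f$), and the idempotent is absorbed on the \emph{left} of $(ma)^{k}$, i.e.\ $f(ma)^{k}=(ma)^{k}$ rather than $(ma)^{k}f=(ma)^{k}$. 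So transcribing your formula for $w$ with $e\mapsto f$, $s\mapsto t^{k}$ does not yield a left inverse of $v$. The step is repairable: writing $N=(ma)^{k}$, $q=t^{k}$, so that $fN=N$ and $qNf=f$, one checks directly that
\begin{equation*}
\bigl(1+fqf-fqN\bigr)\bigl(N+1-f\bigr)=1,
\end{equation*}
the key simplifications being $fqfN=fqN$, $fqNf=f$, and $fqN^{2}=fq(Nf)N=(fqNf)N=fN=N$ (using $N=fN$). Equivalently, one can avoid the explicit formula by two applications of the one-sided Jacobson lemma: $u=1-\bigl((1-(am)^{k})m^{-}\bigr)m$ and $v=1-f\bigl(1-(ma)^{k}\bigr)$ are each simultaneously left invertible with $1-m\bigl(1-(am)^{k}\bigr)m^{-}=1-\bigl(1-(ma)^{k}\bigr)f$, which gives (2)$\Leftrightarrow$(3) directly. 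Either repair should be written out; as it stands your (1)$\Rightarrow$(3) is an assertion, and the analogy it appeals to is false.
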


\begin{lemma}\emph{\cite[Corollary 2.7]{chen}}\label{lem2}
Let $a\in R$, $k\geq1$, and let $m\in R$ be regular with inner inverse $m^{-}$. Then the following conditions are equivalent:
\begin{itemize}
\item[\rm(1)] $a$ is right invertible along $m$;

\item[\rm(2)] $u=(am)^{k}+1-m^{-}m$ is right invertible;

\item[\rm(3)] $v=(ma)^{k}+1-mm^{-}$ is right invertible.

\end{itemize}
%In this case, $m(am)^{k-1}u^{-1}_{r}$ is a right inverse of $a$ along $m$.
\end{lemma}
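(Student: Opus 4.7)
The plan is to prove the three equivalences by establishing $(1)\Leftrightarrow(2)$ directly and $(2)\Leftrightarrow(3)$ via a Jacobson-type argument. Throughout I will use freely that both $m^{-}m$ and $mm^{-}$ are idempotent, each being a direct consequence of $mm^{-}m=m$. The direction $(2)\Rightarrow(1)$ is the immediate one: starting from $uu'=1$ and multiplying on the left by $m$, the identity $mm^{-}m=m$ causes the middle terms to cancel, leaving $m(am)^{k}u'=m$. Since $m(am)^{k}=(ma)^{k}m=mam(am)^{k-1}$, this rewrites as $m=mam\cdot[(am)^{k-1}mu']$, showing $m\in mamR$ and hence $a$ is right invertible along $m$.

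The main work lies in $(1)\Rightarrow(2)$. Given $m=mamx$, the crucial first move is to normalize $x$ by replacing it with $m^{-}mx$; this preserves the equation $mamx=m$ because $amm^{-}m=am$, and it forces $x$, and consequently every $x^{k}$, to lie in $m^{-}mR$. A short induction starting from $(am)^{2}x=am$ then gives $(am)^{j+1}x=(am)^{j}$ for every $j\geq 1$, so $(am)^{k}x^{k-1}=am$ and in particular $(am)^{k}x^{k}=amx$. My proposed right inverse is
\[
u'=x^{k}+(1-m^{-}m)(1-amx),
\]
and the verification $uu'=1$ proceeds by combining five ingredients: $(am)^{k}x^{k}=amx$; $m^{-}m\,x^{k}=x^{k}$ (from $x^{k}\in m^{-}mR$); $m^{-}m\cdot amx=m^{-}m$ (from $mamx=m$); $(am)^{k}(1-m^{-}m)=0$ (from $amm^{-}m=am$); and $(m^{-}m)^{2}=m^{-}m$. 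Under these, all stray terms cancel in pairs and the product collapses to $1$.

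For $(2)\Leftrightarrow(3)$ I would set $P:=m^{-}-(am)^{k-1}a$. A direct expansion yields $Pm=m^{-}m-(am)^{k}$ and $mP=mm^{-}-(ma)^{k}$, so that $u=1-Pm$ and $v=1-mP$. The one-sided Jacobson lemma then transfers right invertibility between them: if $(1-mP)c=1$, then $mPc=c-1$, and one checks $(1-Pm)(1+Pcm)=1$ by a three-line expansion. I expect the main obstacle to be the construction in $(1)\Rightarrow(2)$: the naive guess $u'=x^{k}+1-m^{-}m$ does not satisfy $uu'=1$ without further care, and identifying both the correct normalization $x\in m^{-}mR$ and the correction term $-(1-m^{-}m)\,amx$, which kills the stray $(1-m^{-}m)$-component of $amx$, is the delicate point of the whole argument.
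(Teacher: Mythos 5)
Your proof is correct. Note that the paper does not prove this lemma at all: it is quoted verbatim from \cite[Corollary 2.7]{chen}, so there is no in-paper argument to compare against; what you have supplied is a self-contained proof of the cited result, and it follows the standard Mary--Patr\'{\i}cio style of argument (normalize the solution of $mamx=m$ into $m^{-}mR$, build an explicit right inverse of $(am)^{k}+1-m^{-}m$, and transfer to $(ma)^{k}+1-mm^{-}$ via the one-sided Jacobson lemma applied to $P=m^{-}-(am)^{k-1}a$). All the key identities you invoke check out: $(am)^{k}x^{k}=amx$, $(1-m^{-}m)x^{k}=0$, $(am)^{k}(1-m^{-}m)=0$, and $m^{-}m\,amx=m^{-}m$ do make $uu'=1$ collapse as claimed. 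The only blemish is a typo in the direction $(2)\Rightarrow(1)$: from $m(am)^{k}u'=m$ and $m(am)^{k}=mam(am)^{k-1}$ the conclusion should read $m=mam\cdot[(am)^{k-1}u']$, without the extra factor of $m$ inside the bracket.
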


 In \cite[Theorem 3.1]{chen}, Chen et al. proved that for any  integer $k\geq1$, $a\in S^{\dag}$ and $aS=a^{2}S$ if and only if $a^{*}\in S^{\dag}$  and $Sa^{*}=S(a^{2})^{*}$ if and only if $(a^{*}a)^{k}$ is left invertible along $a^{*}$. In Corollary \ref{cor3}, we have extended it to the cases of the inverse along an element when $k=1$. We have no idea whether it holds for  $k\geq1$ in a $*$-monoid. But we will prove that it is valid in $*$-ring for any integer $k\geq1$ in the following theorem.

\begin{theorem}\label{th8}
Let $a, d\in R$, $(ad)^{*}=ad$, $k\geq1$. Then the following conditions are equivalent:
\begin{itemize}
\item[\rm(1)] $a^{||d}$ exists,  $Rd=Rd^{2}$;

\item[\rm(2)] $(da)^{k}$ is left invertible along $d$.
\end{itemize}
\end{theorem}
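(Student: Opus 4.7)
The plan is to prove $(1)\Leftrightarrow(2)$ by direct algebraic manipulation whose engine is the shift identity $d(ad)^{m}=(da)^{m}d$ (which holds in any semigroup) together with the one-sided regularity hypothesis $Rd=Rd^{2}$. The involution hypothesis $(ad)^{*}=ad$ will be needed only at the very end, via Corollary \ref{cor1}, to pass from ``$a$ is left invertible along $d$'' to ``$a^{||d}$ exists''.

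For $(1)\Rightarrow(2)$: since $a^{||d}$ exists, $a$ is in particular left invertible along $d$, so there is $t\in R$ with $d=tdad$. Iterating this $k$ times and applying the shift identity gives $d=t^{k}d(ad)^{k}=t^{k}(da)^{k}d$. Using $Rd=Rd^{2}$, choose $s\in R$ with $d=sd^{2}$ and substitute into the interior $d$ of $t^{k}d(ad)^{k}$:
\[
d=t^{k}\cdot sd^{2}\cdot(ad)^{k}=t^{k}s\cdot d\cdot d(ad)^{k}=t^{k}s\cdot d(da)^{k}d\in Rd(da)^{k}d,
\]
which is precisely the statement ``$(da)^{k}$ is left invertible along $d$''.

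For $(2)\Rightarrow(1)$: left invertibility of $(da)^{k}$ along $d$ produces $y\in R$ with $d=y\,d(da)^{k}d$, and the shift identity collapses the right-hand side to $y d^{2}(ad)^{k}$. Since $d\in Rd^{2}$, I immediately obtain $Rd=Rd^{2}$. Re-bracketing $d^{2}(ad)^{k}=d(da)^{k-1}\cdot dad$ then gives $d=yd(da)^{k-1}\cdot dad\in R\cdot dad$, so $a$ is left invertible along $d$; Corollary \ref{cor1} upgrades this to ``$a^{||d}$ exists'', delivering the remaining half of (1).

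The only subtle point, and where a naive attempt might feel stuck, is spotting the two applications of the shift identity: first the form $d(da)^{k}d=d^{2}(ad)^{k}$ (used in both directions), and second the form $d^{2}(ad)^{k}=d(da)^{k-1}dad$ (used in $(2)\Rightarrow(1)$). Lemmas \ref{lem4} and \ref{lem2} recalled at the start of the section suggest an alternative, ring-theoretic route through the left invertibility of the elements $(da)^{k}+1-dd^{-}$ and $d(da)^{k}+1-dd^{-}$; I would fall back on that route if the purely multiplicative computation broke, but every step in the approach above is multiplicative, so the argument appears to go through already in any $*$-monoid.
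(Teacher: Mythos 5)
Your $(1)\Rightarrow(2)$ direction is correct and essentially identical to the paper's, and in $(2)\Rightarrow(1)$ the deduction that $a$ is left invertible along $d$ (hence $a^{||d}$ exists via Corollary \ref{cor1}) is also fine. The gap is in your claim that $Rd=Rd^{2}$ follows ``immediately.'' From $d=y\,d(da)^{k}d$ the shift identity does give $d=yd^{2}(ad)^{k}$, but this element lies in $Rd^{2}(ad)^{k}$, not in $Rd^{2}$: the set $Rd^{2}$ consists of left multiples of $d^{2}$, i.e.\ elements with $d^{2}$ as the \emph{rightmost} factor, whereas in $yd^{2}(ad)^{k}$ the factor $(ad)^{k}$ trails after $d^{2}$. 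Already for $k=1$ your identity reads $d=yd^{2}ad$, which gives $d\in Rdad$ but not, by itself, $d\in Rd^{2}$. So the purely multiplicative argument does not close.

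This is precisely the hard half of the theorem, and it is why the paper proves it only in a $*$-ring rather than a $*$-monoid (the authors explicitly say they do not know whether the statement holds for general $k$ in a $*$-monoid; your multiplicative argument, if correct, would settle that). The route you mention as a fallback is in fact the paper's actual proof: since $a^{||d}$ exists, $d$ is regular, and Lemmas \ref{lem4} and \ref{lem2} convert the hypotheses into left invertibility of $d(da)^{k}+1-dd^{-}$ and invertibility of $(da)^{k}+1-dd^{-}$. The factorization
\[
(d^{2}d^{-}+1-dd^{-})\bigl((da)^{k}+1-dd^{-}\bigr)=d(da)^{k}+1-dd^{-}
\]
then forces $d^{2}d^{-}+1-dd^{-}=1-(1-d)dd^{-}$ to be left invertible, Jacobson's lemma transfers this to $1-dd^{-}(1-d)=d+1-dd^{-}$, and multiplying a left inverse $r$ of the latter against $d$ yields $d=rd^{2}\in Rd^{2}$. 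You should replace your one-line claim with this argument (or an equivalent one using the additive structure of $R$); note that it genuinely needs the ring, not just the monoid.
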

\begin{proof}
(1)$\Rightarrow$(2). Since $a$ is  invertible along $d$, there exists $t\in R$ such that $d=tdad$. Then, we have $d=tdad=t(tdad)ad=t^{2}(da)^{2}d=\cdots=t^{k}(da)^{k}d$. According to $Rd=Rd^{2}$, there exists $s\in R$ such that $d=sd^{2}$. Then, $d=t^{k}(da)^{k}d=
t^{k}da(da)^{k-1}d=t^{k}sd^{2}a(da)^{k-1}d=t^{k}sd(da)^{k}d\in Rd(da)^{k}d$. Therefore, $(da)^{k}$ is left invertible along $d$.

(2)$\Rightarrow$(1). Suppose that $(da)^{k}$ is left invertible along $d$, there exists $t\in R$ such that $d=td(da)^{k}d$. Thus, $d=td(da)^{k}d=td(da)^{k-1}dad\in Rdad$, which implies that $a$ is  invertible along $d$ by  Corollary \ref{cor1}. Thus $d$ is regular. By Lemma \ref{lem4} and Lemma \ref{lem2}, we have $d(da)^{k}+1-dd^{-}$ is left invertible and $(da)^{k}+1-dd^{-}$ is invertible. From
$(d^{2}d^{-}+1-dd^{-})((da)^{k}+1-dd^{-})=d(da)^{k}+1-dd^{-}$, we obtain that $d^{2}d^{-}+1-dd^{-}$ is left invertible, thus, $d+1-dd^{-}$ is left invertible by Jacobson Lemma. Then there exists $r\in R$ such that $r(d+1-dd^{-})=1$. Hence, $d=r(d+1-dd^{-})d=rd^{2}\in Rd^{2}$.
\end{proof}

Since  in \cite[Theorem 3.4]{chen} Chen et al.  proved that $a\in S^{\dag}$ and $Sa=Sa^{2}$ if and only if $a^{*}\in S^{\dag}$  and $a^{*}S=(a^{2})^{*}S$ if and only if $(aa^{*})^{k}$ is right invertible along $a^{*}$. We extend it to the inverse along an element under condition $(ad)^{*}=ad$ in the following theorem.
\begin{theorem}\label{th2}
Let $a, d\in S$, $(ad)^{*}=ad$, $k\geq1$. Then the following conditions are equivalent:
\begin{itemize}
\item[\rm(1)] $a^{||d}$ exists,  $dS=d^{2}S$;

\item[\rm(2)] $(ad)^{k}$ is right invertible along $d$.

\end{itemize}
\end{theorem}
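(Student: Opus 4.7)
The plan is to carry out the right-handed mirror image of the proof of Theorem \ref{th8}. For $(1)\Rightarrow(2)$, by Corollary \ref{cor1}(2) the hypothesis that $a^{||d}$ exists supplies some $t \in S$ with $d = dadt$. Iterating by substituting this identity into the rightmost $d$ on the right-hand side yields $d = d(ad)^k t^k$ for every $k \geq 1$. Picking $s$ with $d = d^2 s$ from $dS = d^2 S$, rewrite the identity as $d = d(ad)^{k-1} a \cdot d \cdot t^k$, replace the middle $d$ with $d^2 s$, and regroup to produce $d = d(ad)^k \cdot d \cdot st^k$; since $dst^k \in dS$, this exhibits $(ad)^k$ as right invertible along $d$.

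For $(2)\Rightarrow(1)$, write $d = d(ad)^k z$ with $z \in dS$. Peeling off one factor of $ad$ on the left gives $d \in dadS$, so $a$ is right invertible along $d$, and Corollary \ref{cor1}(2) yields both $a^{||d}$ and an inner inverse $d^{-}$ of $d$. The main obstacle is the remaining claim $d \in d^2 S$, which I expect to require the ring structure (subtraction and Jacobson's lemma) already used in Theorem \ref{th8}.

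To deliver $d \in d^2 S$, apply Lemma \ref{lem2} with $(ad)^k$ in the role of ``$a$'' (and exponent $1$) to obtain that $(ad)^k d + 1 - d^{-} d$ is right invertible; applying Lemmas \ref{lem4} and \ref{lem2} with $a$ itself (and exponent $k$) gives that $(ad)^k + 1 - d^{-} d$ is both left and right invertible, hence invertible. The factorization
\begin{equation*}
\bigl((ad)^k + 1 - d^{-} d\bigr)\bigl(d^{-} d^2 + 1 - d^{-} d\bigr) = (ad)^k d + 1 - d^{-} d,
\end{equation*}
which is verified using $d d^{-} d = d$ and the idempotence of $1 - d^{-} d$, then forces $d^{-} d^2 + 1 - d^{-} d$ to be right invertible. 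Noting that $d^{-} d^2 + 1 - d^{-} d = 1 - (d^{-} d)(1 - d)$ and $d + 1 - d^{-} d = 1 - (1 - d)(d^{-} d)$, Jacobson's lemma transfers right invertibility to $d + 1 - d^{-} d$. Multiplying a right-inverse equation $(d + 1 - d^{-} d)v = 1$ on the left by $d$ and using $d d^{-} d = d$ collapses the cross terms to $d^2 v = d$, yielding $d \in d^2 S$ as required.
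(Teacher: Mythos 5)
Your direction (1)$\Rightarrow$(2) is essentially the paper's argument and is fine. The problem is in (2)$\Rightarrow$(1): Theorem \ref{th2} is stated for $a,d\in S$, where $S$ is a $*$-\emph{monoid}, not the unital $*$-ring $R$. Your derivation of $d\in d^{2}S$ is built entirely out of ring-theoretic tools --- the elements $(ad)^{k}d+1-d^{-}d$ and $d^{-}d^{2}+1-d^{-}d$, the appeal to Lemmas \ref{lem4} and \ref{lem2} (both stated only for $R$), and Jacobson's lemma --- none of which are available without subtraction. You flag this yourself (``I expect to require the ring structure''), but that expectation is exactly where the proof fails to meet the statement: what you have proved is a right-handed companion of Theorem \ref{th8}, valid in $R$, not the monoid statement claimed here. (For what it is worth, your ring computation is correct: the factorization does hold, using $(ad)^{k}(1-d^{-}d)=0$ and $(1-d^{-}d)d^{-}d^{2}=0$, so in the ring setting your argument is a legitimate alternative.)

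The paper's proof of $d\in d^{2}S$ stays inside the monoid by using the hypothesis $(ad)^{*}=ad$ computationally rather than structurally: from $d=d(ad)^{k}dt$ one takes adjoints to get $d^{*}=t^{*}d^{*}(ad)^{k}d^{*}$, shows first that $(ad)^{k}dt$ is symmetric, then that $d(d^{2}t)^{*}=d^{2}td^{*}$, and finally writes $d=d((ad)^{k}dt)^{*}=d(d^{2}t)^{*}a^{*}(ad)^{k-1}=d^{2}td^{*}a^{*}(ad)^{k-1}\in d^{2}S$. To close the gap you would need an involution-based argument of this kind, or else weaken the theorem so that it is only asserted in $R$.
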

\begin{proof}
(1)$\Rightarrow$(2). Since $a$ is  invertible along $d$, there exists $h\in S$ such that $d=dadh$. Then, we have $d=dadh=da(dadh)h=d(ad)^{2}h^{2}=\cdots=d(ad)^{k}h^{k}$. According to $dS=d^{2}S$, there exists $s\in S$ such that $d=d^{2}s$. Then, $d=d(ad)^{k}h^{k}=
d(ad)^{k-1}adh^{k}=d(ad)^{k-1}ad^{2}sh^{k}=d(ad)^{k}dsh^{k}\in d(ad)^{k}dS$. Therefore, $(ad)^{k}$ is right invertible along $d$.

(2)$\Rightarrow$(1). Suppose that $(ad)^{k}$ is right invertible along $d$, there exists $t\in S$ such that $d=d(ad)^{k}dt$. Thus, $d=d(ad)^{k}dt=dad(ad)^{k-1}dt\in dadS$, which implies that $a$ is  invertible along $d$ by  Corollary \ref{cor1}. Since  $d^{*}=t^{*}d^{*}(ad)^{k}d^{*}$,
we have
$$(ad)^{k}dt=ad(ad)^{k-1}dt=d^{*}a^{*}(ad)^{k-1}dt=t^{*}d^{*}(ad)^{k}d^{*}a^{*}(ad)^{k-1}dt=t^{*}d^{*}(ad)^{2k}dt.$$
It implies that $(ad)^{k}dt=((ad)^{k}dt)^{*}$. Since

~~~~~~~~$d(d^{2}t)^{*}=dt^{*}d^{*}d^{*}=dt^{*}d^{*}t^{*}d^{*}(ad)^{k}d^{*}=d(tdt)^{*}d^{*}(ad)^{k-1}add^{*}\\
~~~~~~~~~~~~~~~~~~~~~~~=d(tdt)^{*}d^{*}(ad)^{k-1}ad(ad)^{k}dtd^{*}=d(tdt)^{*}d^{*}(ad)^{2k-1}addtd^{*}\\
~~~~~~~~~~~~~~~~~~~~~~~=d(tdt)^{*}d^{*}(ad)^{2k-1}ad(ad)^{k}dtdtd^{*}=d(tdt)^{*}d^{*}(ad)^{3k}d(tdt)d^{*}.$\\
It follows that $d(d^{2}t)^{*}=(d(d^{2}t)^{*})^{*}=d^{2}td^{*}$. Thus, $d=d(ad)^{k}dt=d((ad)^{k}dt)^{*}=d(((ad)^{k-1}ad^{2}t)^{*})=d(d^{2}t)^{*}a^{*}(ad)^{k-1}=d^{2}td^{*}a^{*}(ad)^{k-1}\in d^{2}S$. Therefore,
$dS=d^{2}S$.
\end{proof}

Dually, the following result is obtained.
\begin{theorem}\label{th3}
Let $a, d\in S$, $(da)^{*}=da$, $k\geq1$. Then the following conditions are equivalent:
\begin{itemize}
\item[\rm(1)] $a^{||d}$ exists,  $Sd=Sd^{2}$;

\item[\rm(2)] $(da)^{k}$ is left invertible along $d$.

\end{itemize}
\end{theorem}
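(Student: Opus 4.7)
My plan is to reduce Theorem \ref{th3} to Theorem \ref{th2} by applying the involution. Set $\tilde a = a^*$ and $\tilde d = d^*$. The hypothesis $(da)^* = da$ rewrites as $(\tilde a \tilde d)^* = (a^*d^*)^* = da = \tilde a \tilde d$, which is exactly the hypothesis of Theorem \ref{th2} for the pair $(\tilde a, \tilde d)$. Since the involution is an anti-automorphism of $S$, the following correspondences hold: $a^{||d}$ exists iff $(\tilde a)^{||\tilde d}$ exists (with the two inverses $*$-related); $Sd = Sd^2$ iff $\tilde d S = \tilde d^2 S$; and $(da)^k$ is left invertible along $d$, i.e.\ $d \in Sd(da)^k d$, iff $d^* \in d^*(da)^k d^* S$, which (using $(\tilde a \tilde d)^k = ((da)^*)^k = (da)^k$) is precisely ``$(\tilde a \tilde d)^k$ is right invertible along $\tilde d$''. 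Thus applying Theorem \ref{th2} to $(\tilde a, \tilde d)$ and translating through $*$ yields the equivalence of Theorem \ref{th3} immediately.

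For a self-contained argument mirroring the proof of Theorem \ref{th2} directly, the two implications run in parallel. For (1)$\Rightarrow$(2), iterate $d = xdad$ (coming from $a$ being left invertible along $d$) to obtain $d = x^k d(ad)^k$, then substitute $d = sd^2$ from $Sd = Sd^2$ into the middle $d$ factor to get $d = x^k s d \cdot d(ad)^k = x^k sd \cdot (da)^k d \in Sd(da)^k d$ (using $d(ad)^k = (da)^k d$). For (2)$\Rightarrow$(1), from $d = td(da)^k d$ one reads off $d = td(da)^{k-1}\cdot dad \in Sdad$, so $a$ is left invertible along $d$, and the involution-dual of Corollary \ref{cor1}(2), obtained by the same trick as above, upgrades this to $a^{||d}$. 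To obtain $Sd = Sd^2$, I would mirror the self-adjointness chain used in the proof of Theorem \ref{th2}: take $*$ of the starting relation to get $d^* = d^*(da)^k d^* t^*$; substitute this into $td(da)^k = td(da)^{k-1}(da)^* = td(da)^{k-1} a^* d^*$ to reach the manifestly self-adjoint expression $td(da)^{2k} d^* t^*$, giving $td(da)^k = (da)^k d^* t^*$; plug back into $d = td(da)^k \cdot d$ to obtain $d = (da)^{k-1} a^* (td^2)^* d$; finally, a further iterative substitution produces a manifestly self-adjoint expression for $(td^2)^* d$, forcing $(td^2)^* d = d^* t d^2 \in Sd^2$.

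The main obstacle in the direct route is the last step, producing a manifestly self-adjoint expression for $(td^2)^* d$ through the interplay of $d = td(da)^k d$ and $d^* = d^*(da)^k d^* t^*$; this is an exact mirror of the lengthy computation already carried out for Theorem \ref{th2}, with every factor order reversed, and the involution reduction above sidesteps it entirely.
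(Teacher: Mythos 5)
Your primary argument --- transporting Theorem \ref{th2} through the involution via $\tilde a=a^{*}$, $\tilde d=d^{*}$, checking that $(da)^{*}=da$ becomes $(\tilde a\tilde d)^{*}=\tilde a\tilde d$ and that existence of $a^{||d}$, the condition $Sd=Sd^{2}$, and left invertibility of $(da)^{k}$ along $d$ correspond exactly to the starred statements --- is correct and is precisely what the paper does when it states the result ``dually.'' The additional direct computation you sketch is not needed.
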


In \cite[Theorem 3.8]{chen}, Chen et al. showed that $a\in S^{\#}\cap S^{\dag}$ if and only if $(a^{*}a)^{k}$ is invertible along $a$ if and only if $(aa^{*})^{k}$ is invertible along $a$. We generalize this result to the cases of the inverse along an element in the following result.
\begin{theorem}\label{th5}
Let $a, d\in S$ with $k\geq1$, $(ad)^{*}=ad$, $(da)^{*}=da$. Then the following conditions are equivalent:
\begin{itemize}
\item[\rm(1)] $a^{||d}$ exists,  $d\in S^{\#}$;

\item[\rm(2)] $a^{||d}$ exists,  $d\in S^{\#}\cap S^{\dag}$;

\item[\rm(3)] $(ad)^{k}$ is right invertible along $d$, $(da)^{k}$ is left invertible along $d$;

\item[\rm(4)] $(ad)^{k}$ is  invertible along $d$;

\item[\rm(5)] $(da)^{k}$ is  invertible along $d$.
\end{itemize}
\end{theorem}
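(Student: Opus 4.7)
The plan is to treat (1) as the hub and verify (1)$\iff$(j) for $j\in\{2,3,4,5\}$. The implication (2)$\Rightarrow$(1) is immediate; the equivalences (1)$\iff$(3), (1)$\iff$(4), (1)$\iff$(5) reduce to bookkeeping with Theorems~\ref{th2}, \ref{th3} and Corollary~\ref{th4} once one recalls that $d\in S^{\#}$ is equivalent to the conjunction $dS=d^{2}S$ and $Sd=Sd^{2}$ (Clifford's characterisation). The real work lies in (1)$\Rightarrow$(2), where I must apply Corollary~\ref{cor1} a second time through the involution.

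For (1)$\iff$(3), Theorem~\ref{th2} turns ``$a^{||d}$ exists and $dS=d^{2}S$'' into ``$(ad)^{k}$ is right invertible along $d$'' and Theorem~\ref{th3} turns ``$a^{||d}$ exists and $Sd=Sd^{2}$'' into ``$(da)^{k}$ is left invertible along $d$''; conjoining both statements gives the equivalence directly. For (1)$\iff$(4), I would decompose invertibility along $d$ as left plus right invertibility along $d$: going (1)$\Rightarrow$(4), Theorem~\ref{th2} supplies the right side, and Corollary~\ref{th4}(1) --- applied with $a$ left invertible along $d$ (automatic from $a^{||d}$) together with $Sd=Sd^{2}$ --- supplies the left side; conversely (4)$\Rightarrow$(1) splits (4) into its two one-sided halves, the right half yielding $a^{||d}$ and $dS=d^{2}S$ via Theorem~\ref{th2} and the left half then yielding $Sd=Sd^{2}$ via Corollary~\ref{th4}(1). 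The equivalence (1)$\iff$(5) is strictly parallel, replacing $(ad)^{k}$ by $(da)^{k}$ and using Theorem~\ref{th3} and Corollary~\ref{th4}(2) throughout.

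For (1)$\Rightarrow$(2), the only new content is $d\in S^{\dag}$. Corollary~\ref{cor1}(1) applied directly gives $d\in S^{(1,4)}$. To produce a $\{1,3\}$-inverse I would reapply Corollary~\ref{cor1}(1) to the pair $(a^{*},d^{*})$: the existence of $(a^{*})^{||d^{*}}$ follows by taking $*$ of the defining relations of $a^{||d}$, and the hypothesis $(a^{*}d^{*})^{*}=a^{*}d^{*}$ that the corollary demands is exactly $(da)^{*}=da$. This yields $d^{*}\in S^{(1,4)}$, and combining with Lemma~\ref{lem6} and applying $*$ gives $d\in S^{(1,3)}$. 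The classical fact that $S^{(1,3)}\cap S^{(1,4)}\subseteq S^{\dag}$ (with the Moore--Penrose inverse assembled as $d^{(1,4)}\,d\,d^{(1,3)}$ after refining $d^{(1,3)}$ to a $\{1,2,3\}$-inverse) then delivers $d\in S^{\dag}$. The principal obstacle is recognising this ``for-free'' dual of Corollary~\ref{cor1}; once spotted, no further computation is needed beyond what the earlier theorems of this section already provide.
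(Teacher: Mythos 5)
Your proposal is correct and follows essentially the same route as the paper: (1)$\iff$(3), (1)$\iff$(4), (1)$\iff$(5) via Theorems~\ref{th2}, \ref{th3} and Corollary~\ref{th4} exactly as in the published proof, and (1)$\iff$(2) via Corollary~\ref{cor1}. Your dualisation step for (1)$\Rightarrow$(2) (applying Corollary~\ref{cor1} to $(a^{*},d^{*})$ under $(da)^{*}=da$ to get $d\in S^{(1,3)}$, then combining with $d\in S^{(1,4)}$) is in fact a more explicit account of what the paper compresses into a single citation of Corollary~\ref{cor1}.
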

\begin{proof}
(1)$\Leftrightarrow$(2). From that $a^{||d}$ exists, $(ad)^{*}=ad$, $(da)^{*}=da$ and  Corollary \ref{cor1}, we obtain $d\in  S^{\dag}$.

(1)$\Leftrightarrow$(3). By Theorem \ref{th2} and Theorem \ref{th3}.

(1)$\Rightarrow$(4). Since $a^{||d}$ exists,  $d\in S^{\#}$, $(ad)^{k}$ is left invertible along $d$ by Corollary \ref{th4}(1), and $(ad)^{k}$ is right invertible along $d$ by Theorem \ref{th2}. Thus, $(ad)^{k}$ is  invertible along $d$.

(4)$\Rightarrow$(1). By Theorem \ref{th2}, we have that $a^{||d}$ exists and $dS=d^{2}S$. From Corollary \ref{th4}(1), we get $Sd=Sd^{2}$. Thus $d\in S^{\#}$.

(1)$\Leftrightarrow$(5). Since $a^{||d}$ exists,  $d\in S^{\#}$, we have that $(da)^{k}$ is left invertible along $d$ by Theorem \ref{th3}, from Corollary \ref{th4}(2), we obtain that $(da)^{k}$ is right invertible along $d$. Thus, $(da)^{k}$ is  invertible along $d$.

(5)$\Rightarrow$(1). By Theorem \ref{th3}, we have that  $a^{||d}$ exists and $Sd=Sd^{2}$. From Corollary \ref{th4}(2), we get $dS=d^{2}S$. Thus $d\in S^{\#}$.
\end{proof}

From the proof of Theorem \ref{th5}, we have $a^{||d}$ exists and  $d\in S^{\#}$ if and only if $(ad)^{k}$ is  invertible along $d$ only needs one condition $(ad)^{*}=ad$. In Corollary \ref{cor4}, we have proved that $da$ is invertible along $d$ if and only if $a^{||d}$ exists and $d\in S^{\#}$ under condition $(ad)^{*}=ad$. Next we will show that it is valid when $(da)^{k}$ is invertible along $d$ in $*$-ring $R$.
\begin{proposition}
Let $a, d\in R$, $(ad)^{*}=ad$, $k\geq1$. Then the following conditions are equivalent:
\begin{itemize}
\item[\rm(1)] $a^{||d}$ exists,  $d\in R^{\#}$;

\item[\rm(2)] $(da)^{k}$ is  invertible along $d$.
\end{itemize}
\end{proposition}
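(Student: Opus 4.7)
My plan is to split ``$(da)^{k}$ invertible along $d$'' into its left and right halves via Lemma \ref{lem11}(2) with $b=c=d$, and then feed each half into a result that is already in place. The left half is handled by Theorem \ref{th8}, which under the standing hypothesis $(ad)^{*}=ad$ characterizes left invertibility of $(da)^{k}$ along $d$ by the pair ``$a^{||d}$ exists and $Rd=Rd^{2}$''. The right half is handled by Corollary \ref{th4}(2), which trades right invertibility of $(da)^{k}$ along $d$ for $dR=d^{2}R$ once $a$ is known to be right invertible along $d$. No extra involution symmetry is needed, since Theorem \ref{th8} already builds in $(ad)^{*}=ad$ and Corollary \ref{th4}(2) requires no involution hypothesis at all.

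For (1)$\Rightarrow$(2), the hypothesis $d\in R^{\#}$ immediately yields both $Rd=Rd^{2}$ and $dR=d^{2}R$. Combined with the existence of $a^{||d}$, the first ideal condition activates Theorem \ref{th8}(1)$\Rightarrow$(2) and gives $(da)^{k}$ left invertible along $d$; the second, together with the right invertibility of $a$ along $d$ that comes from $a^{||d}$, activates Corollary \ref{th4}(2) and gives $(da)^{k}$ right invertible along $d$. Lemma \ref{lem11}(2) then promotes the two one-sided statements to full invertibility of $(da)^{k}$ along $d$.

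For (2)$\Rightarrow$(1), invertibility of $(da)^{k}$ along $d$ furnishes both halves. The left half, through Theorem \ref{th8}(2)$\Rightarrow$(1), delivers $a^{||d}$ and $Rd=Rd^{2}$; in particular $a$ is right invertible along $d$, so the right half together with Corollary \ref{th4}(2) delivers $dR=d^{2}R$. The conjunction $Rd=Rd^{2}$ and $dR=d^{2}R$ characterizes $d\in R^{\#}$, a standard fact already invoked in the proof of Theorem \ref{the8}. I do not anticipate a real obstacle here: the delicate work involving $(ad)^{*}=ad$ and Jacobson's lemma is encapsulated inside Theorem \ref{th8}, and, in contrast to Theorem \ref{th5}, the symmetry condition $(da)^{*}=da$ is \emph{not} required, since the leftward direction is now routed through Theorem \ref{th8} instead of Theorem \ref{th3}.
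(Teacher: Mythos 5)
Your proposal is correct and follows essentially the same route as the paper: both directions are obtained by splitting invertibility along $d$ into its one-sided halves, handling the left half with Theorem \ref{th8} and the right half with Corollary \ref{th4}(2), and recombining via the characterization $Rd=Rd^{2}$, $dR=d^{2}R$ of $d\in R^{\#}$. Your explicit remark that Corollary \ref{th4}(2) needs $a$ to be right invertible along $d$ (supplied by the existence of $a^{||d}$) is a point the paper leaves implicit.
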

\begin{proof}
(1)$\Rightarrow$(2). From Theorem \ref{th8}, we have that $(da)^{k}$ is left invertible along $d$. By Corollary \ref{th4}(2), we obtain that $(da)^{k}$ is right invertible along $d$. Therefore, $(da)^{k}$ is  invertible along $d$.

(2)$\Rightarrow$(1). By Theorem \ref{th8}, we have that $a^{||d}$ exists and $Rd=Rd^{2}$. From Corollary \ref{th4}(2), we obtain $dR=d^{2}R$. Thus $d\in R^{\#}$.
\end{proof}

Take $d=a$ in Theorem \ref{th5}. Then by Lemma \ref{lem9}  and  Theorem \ref{the10}, we have following result.
\begin{corollary}
Let $a\in S$, $(a^{2})^{*}=a^{2}$. Then the following conditions are equivalent:
\begin{itemize}
\item[\rm(1)] $a\in S^{\#}$;

\item[\rm(2)] $a\in S^{\#}\cap S^{\dag}$;

\item[\rm(3)] $a$ is EP.
\end{itemize}
\end{corollary}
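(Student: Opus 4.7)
The plan is to obtain the corollary as a direct specialization of Theorem \ref{th5} to the case $d=a$, glued to the equivalence in Lemma \ref{lem9} and to Theorem \ref{the10}. The first thing I would check is the bookkeeping: with $d=a$ both symmetry hypotheses $(ad)^{*}=ad$ and $(da)^{*}=da$ of Theorem \ref{th5} collapse to the single standing assumption $(a^{2})^{*}=a^{2}$, and by the definitional identity $a^{\#}=a^{\|(a,a)}=a^{\|a}$ recorded in Section 2 the statement ``$a^{\|d}$ exists'' is nothing but $a\in S^{\#}$.

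With that identification made, the equivalence (1)$\Leftrightarrow$(2) of the corollary is an immediate rewriting of (1)$\Leftrightarrow$(2) of Theorem \ref{th5}: clause (1) of that theorem reduces to $a\in S^{\#}$, and clause (2) reduces to $a\in S^{\#}\cap S^{\dag}$, which are precisely conditions (1) and (2) here.

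For (2)$\Rightarrow$(3), assume $a\in S^{\#}\cap S^{\dag}$. Lemma \ref{lem9} then gives $a\in S^{\tiny\textcircled{\tiny\#}}\cap S_{\tiny\textcircled{\tiny\#}}$; in particular $a$ is left core invertible, so Theorem \ref{the10} delivers that $a$ is EP. The converse (3)$\Rightarrow$(2) is immediate from the definition of an EP element, which requires $a\in S^{\dag}\cap S^{\#}$.

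I do not expect any serious obstacle here, since the whole argument is a straight assembly of results already proved earlier in the paper. The only point deserving explicit care is the identification $a^{\|a}=a^{\#}$ used in the opening paragraph, without which the reduction of Theorem \ref{th5}(1) to just ``$a\in S^{\#}$'' is not visible; once that is acknowledged the three-way equivalence falls out.
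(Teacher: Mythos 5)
Your proposal is correct and follows exactly the paper's own route: the paper proves this corollary by taking $d=a$ in Theorem \ref{th5} and then invoking Lemma \ref{lem9} and Theorem \ref{the10}, which is precisely the assembly you describe (including the key identification $a^{\|a}=a^{\#}$ that collapses both clauses of Theorem \ref{th5}(1) to $a\in S^{\#}$). No gaps.
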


\centerline {\bf ACKNOWLEDGMENTS}
This research is supported by the National Natural Science Foundation of China (No. 11771076,  11871145); the Fundamental Research Funds for the Central Universities and the Postgraduate Research $\&$ Practice Innovation Program of Jiangsu Province
(No. KYCX$19\_ 0055$).

\end{CJK*}%删除

\begin{thebibliography}{99}
\bibitem{BT} Baksalary OM, Trenkler G. Core inverse of matrices. Linear Multilinear Algebra.  2010;58(6):681-697.


\bibitem{chen} Chen JL, Zou HL, Zhu HH, Patr\'{\i}cio P. The one-sided inverse along an element in semigroups and rings. Mediterr. J. Math. 2017;14:208.
\bibitem{Dc}  Drazin MP. A class of outer generalized inverses. Linear Algebra Appl. 2012;436:1909-1923.
%
\bibitem{Df} Drazin MP. Left and right generalized inverses. Linear Algebra Appl. 2016;510:64-78.

\bibitem{Hart} Hartwig RE.  Block generalized inverses. Arch. Ration. Mech. Anal. 1976;61:197-251.
\bibitem{Ko} Koliha JJ, Patr\'{\i}cio P. Elements of rings with equal spectral idempotents. J. Aust. Math. Soc. 2002;72:137-152.

\bibitem{Mary} Mary X. On generalized inverses and Green's relations. Linear Algebra Appl. 2011;434:1836-1844.

\bibitem{Raki} Rak\'{\i}c DS, Din\v{c}\'{\i}c NC, Djordiev\'{\i}c DS. Group, Moore-Penrose, core and dual core inverse in rings with involution. Linear Algebra Appl. 2014;463:115-133.

\bibitem{wang} Wang L, Mosi\'{c} D. The one-sided inverse along two elements in rings. Linear Multilinear Algebra. DOI: 10.1080/03081087.2019.1679073.
\bibitem{Xu} Xu SZ, Chen JL, Zhang XX. New characterizations for core and dual core inverses in rings with involution. Front. Math. China.
2017;12(1):231-246.
\bibitem{zhu} Zhu HH, Chen JL, Patr\'{\i}cio P. Further results on the inverse along an element in semigroups and rings. Linear Multilinear Algebra 2016;64(3):393-403.
\end{thebibliography}
\end{document}